\theoremstyle{plain}
\def\endproof{\hspace*{\fill}\mbox{\ \rule{.1in}{.1in}}\medskip }
\newtheorem{theorem}{Theorem}[section]
\newtheorem{corollary}[theorem]{Corollary}
\newtheorem{lemma}[theorem]{Lemma}
\newtheorem{definition}[theorem]{Definition}
\theoremstyle{definition}
\newtheorem{remark}[theorem]{Remark}
\numberwithin{equation}{section}
\numberwithin{figure}{section}
\begin{document}

\title[The obstacle problem for the $p$-laplacian]
{The obstacle problem for the $p$-laplacian\\ via Optimal Stopping of Tug-of-War games}
\author{Marta Lewicka and Juan J. Manfredi}
\address{Marta Lewicka and Juan J. Manfredi, University of Pittsburgh, Department of Mathematics, 
139 University Place, Pittsburgh, PA 15260}
\email{lewicka@pitt.edu, manfredi@pitt.edu}
\keywords{tug-of-war games, viscosity
  solutions, p-Laplacian, obstacle problem}

\date{\today}

\begin{abstract} We present a probabilistic  approach to the obstacle problem  for
the $p$-Laplace operator. The solutions are approximated 
by running processes determined by tug-of-war games plus noise,  and
letting the step size go to 
zero, not unlike the case when Brownian motion is  approximated by
random walks.  Rather than stopping the process when the boundary is
reached, the value function is  obtained  by maximizing over all
possible stopping times that are smaller than the exit time of the domain.   
\end{abstract}

\maketitle

\section{Introduction}
Let  $\mathfrak{L}$ be the second order differential operator:
$$\mathfrak{L}= \frac{1}{2} \, \textrm{trace}(\sigma\sigma'(x) D^2_x v),$$
whose matrix coefficient function $\sigma$ is Lipschitz  continuous. Consider the 
obstacle problem in $\mathbb{R}^N$:
\begin{equation} \label{obstaclelinear}
\min\left( - \mathfrak{L} v, v-g\right)=0.
\end{equation}
In order to solve (\ref{obstaclelinear})
probabilistically \cite{PH2}, one first solves the stochastic differential equation:
\begin{equation}\label{sdelinear}
d\mathbf{X}_t=  \sigma(\mathbf{X}_t)\,d\mathbf{W}_t,
\end{equation}
starting from $x$ at time $t=0$. Denoting its solution by $\{\mathbf{X}_t^x, t\ge 0\}$, the value
function is defined by taking the supremum over the set $\mathfrak{T}$ of all
stopping times valued in $[0,\infty]$:
\begin{equation}\label{valuelinear}
v(x)=\sup_{\tau\in\mathfrak{T}} \mathbb{E}\left[  
g(\mathbf{X}^x_\tau) \right],
\end{equation}
This value function,  under appropriate regularity hypothesis on $g$,
turns out then to be the unique
solution to (\ref{obstaclelinear}); for details see  Chapter 5 in \cite{PH2}.

The purpose of the present paper is to consider the analog problem when the
second order linear differential operator $\mathfrak{L}$ is replaced by the $p$-Laplacian:
\begin{equation}\label{introplap}
-\Delta_p u = -\mbox{div} \big(|\nabla u|^{p-2} \nabla u\big), \qquad
2\leq p <\infty.
\end{equation}
Since the operator $-\Delta_p $ is non-linear, we do not have a
suitable variant of the linear stochastic differential equation
(\ref{sdelinear}) that could be used to write a formula similar to
(\ref{valuelinear}).  Instead, we will show that one can use tug-of-war games with noise
as the basic stochastic process. More precisely, we will prove that  
the solutions to the obstacle problem for the $p$-Laplacian for
$p\in [2,\infty)$, can be interpreted as
limits of values of a specific discrete tug-of-war game with noise, when the step-size
$\epsilon$, 
determining the allowed
length of move of a token at each step of the game, converges to $0$. 

To explain our approach, let us first recall a concept of supersolutions suggested
by the tug-of-war characterization in  \cite{MPR0}. This notion is
based on the mean value properties, and it is equivalent to
the notion of viscosity supersolution in the class of continuous
functions.  Namely, let $\Omega\subset\mathbb{R}^N$ be an open, bounded set with Lipschitz
boundary and let $F:\partial{\Omega}\to\mathbb{R}$ be a Lipschitz
continuous boundary data. Choose the parameters $\alpha$ and $\beta$ as follows:
$$ \alpha=\frac{p-2}{N+p}, \qquad \beta =\frac{2+N}{N+p}, $$
 where $\alpha\ge 0$ since $p\ge 2$,  $\beta>0$, and $\alpha+\beta=1$.

\smallskip

\textbf{Supersolutions in the sense of means:} 
A continuous function $v:\Omega\to\mathbb{R}\cup\{\infty\}$ is a
supersolution  in the sense of means if 
whenever $\phi \in \mathcal{C}^{\infty}_0(\Omega)$ is such that $\phi(x)\le
v(x)$ for all $x\in \Omega$, with equality at one point  
$\phi(x_0)=v(x_0)$ ($\phi$ touches $v$ from below at $x_0$), then we have:
\begin{equation}\label{plapmean}
0\le - \phi (x_0) + {\displaystyle 
      \frac{\alpha}{2} \sup_{B_\epsilon(x_0)} \phi + \frac{\alpha}{2}
      \inf_{B_\epsilon(x_0)} \phi +\beta\fint_{B_\epsilon(x_0)} \phi }
    + o(\epsilon^2)  \qquad \mbox{ as } ~\epsilon\to 0^+.
\end{equation}

Above, by $0\le h(\epsilon) + o(\epsilon^2)$ as $\epsilon\to0^+$ we mean that:
$$\lim_{\epsilon\to0^+}\frac{\left[h(\epsilon)\right]^-}{\epsilon^2} =0.$$
Fixing a scale $\epsilon$, we now consider functions for which
(\ref{plapmean}) holds with equality, i.e. without the error term
$\epsilon^2$. 
Let $0<\epsilon_0\ll 1$ be a small constant and define the
fattened outer boundary set, together with the fattened domain: 
$$\Gamma = \{x\in \mathbb{R}^N\setminus\Omega;~
\mbox{dist}(x,\Omega)<\epsilon_0\},\qquad X=\Omega\cup\Gamma.$$

\smallskip

\textbf{$\epsilon$-$p$-harmonious functions:} 
Let $0< \epsilon\leq \epsilon_0$. A bounded, Borel function $u:
X\to\mathbb{R}$ is $\epsilon$-$p$-harmonious with boundary values  
$F:\bar{\Gamma}\to \mathbb{R}$  if:
\begin{equation}\label{dpp00}
u_\epsilon(x) = \left\{\begin{array}{ll}{\displaystyle   
      \frac{\alpha}{2} \sup_{B_\epsilon(x)} u_\epsilon + \frac{\alpha}{2}
      \inf_{B_\epsilon(x)} u_\epsilon +\beta\fint_{B_\epsilon(x)} u_\epsilon  } & \mbox{ for }
    x\in\Omega\vspace{2mm}\\
F(x) & \mbox{ for } x\in\Gamma.
\end{array}\right.
\end{equation}

\medskip

Then, it has been established in \cite{MPR} that $u=\lim_{\epsilon\to 0} u_\epsilon$ is a solution to the Dirichlet problem:
\begin{equation}\label{dirichlet}
\left\{\begin{array}{rcll}
-\Delta_p u &= &0\ \  &\mathrm{in}\ \Omega,\\
 u&=&F\ \  &\mathrm{   on }\  \partial\Omega.
\end{array}\right.
\end{equation}
Let now $\Psi:\mathbb{R}^n\to\mathbb{R} $ be a bounded, Lipschitz
function, which we assume to be compatible with the boundary data: $F(x)\geq
\Psi(x)$ for $x\in\partial{\Omega}$.  
The function $\Psi$ is interpreted as the obstacle and we consider the problem:
\begin{equation}\label{obstacle}
\left\{\begin{array}{rcll}
-\Delta_p u &\geq &0\ \  &\mathrm{in}\ \Omega,\\
 u&\geq &\Psi\ \ &\mathrm{   in }\  \Omega,\\
 -\Delta_p u & =  &0\ \ &\mathrm{   in }\  \{x\in\Omega; ~ u(x)>\Psi(x)\},\\
 u&=&F\ \  &\mathrm{   on }\  \partial\Omega.
\end{array}\right.
\end{equation}
That is, we want to find a $p$-superharmonic function $u$ which takes
boundary values $F$, which is above the obstacle $\Psi$, and which is
actually $p$-harmonic in the complement of the  contact set $\{x\in
\overline{\Omega}\colon u(x)=\Psi(x)\}$.  

The problem (\ref{obstacle}) has been extensively studied from the variational point of
view; see the seminal paper by Lindqvist  \cite{L} and the  book \cite{HKM}.  
In particular, regularity requirements for the domain $\Omega$, the
boundary data $F$ and the obstacle $\Psi$ can be vastly generalized. We
have, however, focused on the Lipschitz category for technical reasons in our proofs.  

\medskip

Our first result shows how to solve the obstacle problem using
$\epsilon$-$p$-superharmonious functions.
The dynamic programing principle (\ref{dpp}) below is similar to the Wald-Bellman equations of
optimal stopping (see Chapter 1 of \cite{PS2}).

\begin{theorem}\label{exists}
Let $\alpha\in [0,1)$ and $\beta=1-\alpha$.
Let $F:\bar\Gamma\rightarrow \mathbb{R}$ and $\Psi:
\mathbb{R}^N\rightarrow\mathbb{R}$ be two bounded, Borel functions
such that $\Psi\leq F$ in $\bar \Gamma$. 
Then there exists a unique bounded Borel function
$u:X\rightarrow\mathbb{R}$ which satisfies:
\begin{equation}\label{dpp}
u_\epsilon(x) = \left\{\begin{array}{ll}{\displaystyle \max\left\{\Psi(x),
      \frac{\alpha}{2} \sup_{B_\epsilon(x)} u_\epsilon + \frac{\alpha}{2}
      \inf_{B_\epsilon(x)} u_\epsilon +\beta\fint_{B_\epsilon(x)} u_\epsilon \right\}} & \mbox{ for }
    x\in\Omega\vspace{2mm}\\
F(x) & \mbox{ for } x\in\Gamma.
\end{array}\right.
\end{equation}
\end{theorem}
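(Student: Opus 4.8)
The plan is to construct $u$ as the limit of a monotone iteration scheme, which is the natural analog of the value iteration in optimal stopping. Define the operator $T$ acting on bounded Borel functions $w:X\to\mathbb{R}$ by setting $(Tw)(x)=F(x)$ for $x\in\Gamma$ and
\[
(Tw)(x)=\max\left\{\Psi(x),\ \frac{\alpha}{2}\sup_{B_\epsilon(x)}w+\frac{\alpha}{2}\inf_{B_\epsilon(x)}w+\beta\fint_{B_\epsilon(x)}w\right\}\qquad\text{for }x\in\Omega.
\]
First I would check that $T$ maps bounded Borel functions to bounded Borel functions: boundedness is clear from $\alpha+\beta=1$ and the boundedness of $F,\Psi$; Borel measurability of $x\mapsto\sup_{B_\epsilon(x)}w$, $x\mapsto\inf_{B_\epsilon(x)}w$, and $x\mapsto\fint_{B_\epsilon(x)}w$ follows by standard arguments (e.g. these are respectively lower/upper semicontinuous in an appropriate sense after passing to a countable supremum, and the integral average depends continuously on $x$ by dominated convergence). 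I would also record that $T$ is monotone: $w_1\le w_2$ implies $Tw_1\le Tw_2$, since $\sup$, $\inf$, $\fint$ and $\max\{\Psi(x),\cdot\}$ all preserve order.

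Next I would set up the iteration. Start from $u^0\equiv -\|F\|_\infty-\|\Psi\|_\infty$ (or any constant below every fixed point) so that $u^0\le Tu^0$, and define $u^{k+1}=Tu^k$. By monotonicity the sequence $(u^k)$ is nondecreasing, and it is bounded above by $M:=\max\{\|F\|_\infty,\|\Psi\|_\infty\}$ since $T$ maps the order interval $[-M,M]$ into itself. Hence $u:=\lim_{k\to\infty}u^k$ exists pointwise, is Borel (a pointwise limit of Borel functions), and is bounded by $M$. To see that $u$ is a fixed point of $T$, I would use that $u^k\uparrow u$ together with monotone/dominated convergence: for each fixed $x$, $\fint_{B_\epsilon(x)}u^k\to\fint_{B_\epsilon(x)}u$ by monotone convergence, $\inf_{B_\epsilon(x)}u^k\uparrow\inf_{B_\epsilon(x)}u$, and $\sup_{B_\epsilon(x)}u^k\uparrow\sup_{B_\epsilon(x)}u$; passing to the limit in $u^{k+1}=Tu^k$ then gives $u=Tu$, i.e. $u$ solves \eqref{dpp}.

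For uniqueness I would argue by comparison. Suppose $u$ and $v$ both solve \eqref{dpp}; they agree on $\Gamma$ (both equal $F$). Let $\gamma:=\sup_X(u-v)$; suppose for contradiction $\gamma>0$, so the sup is approached in $\Omega$. Using the elementary inequality $\max\{a,s\}-\max\{a,t\}\le (s-t)^+\le\sup(s-t)$ applied to the two right-hand sides of \eqref{dpp}, one gets for $x\in\Omega$
\[
u(x)-v(x)\le \frac{\alpha}{2}\big(\sup_{B_\epsilon(x)}u-\sup_{B_\epsilon(x)}v\big)+\frac{\alpha}{2}\big(\inf_{B_\epsilon(x)}u-\inf_{B_\epsilon(x)}v\big)+\beta\fint_{B_\epsilon(x)}(u-v)\le\gamma,
\]
which alone is not strict. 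The standard fix is to localize: since $\alpha+\beta=1$ and the averaging ball $B_\epsilon(x)$ genuinely samples points at positive distance, one shows that if $u(x)-v(x)$ is close to $\gamma$ then $u-v$ must be close to $\gamma$ on almost all of $B_\epsilon(x)$ (because of the $\beta\fint$ term with $\beta>0$), and then propagate this "$u-v=\gamma$ on a large set" property outward through overlapping balls until reaching $\Gamma$, where $u-v=0$, forcing $\gamma\le 0$. I expect this propagation/connectedness argument — making the comparison strict by exploiting that $\beta>0$ and $X$ is connected up to the boundary $\Gamma$ — to be the main obstacle; it is essentially the same mechanism used to prove uniqueness for $\epsilon$-$p$-harmonious functions in \cite{MPR}, adapted to accommodate the extra $\max\{\Psi,\cdot\}$, which only helps since $\max$ is a $1$-Lipschitz nonexpansive operation.
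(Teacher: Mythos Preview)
Your proposal is correct and follows essentially the same Perron-type monotone iteration for existence and the same $\beta>0$ propagation argument for uniqueness as the paper. The one minor difference is that the paper establishes \emph{uniform} convergence of the iterates $u_n\to u$ (via a separate contradiction argument exploiting $\alpha<1$) in order to pass the limit through $T$, whereas you pass the limit pointwise using that $\sup$, $\inf$ and $\fint$ all commute with monotone increasing limits; your shortcut is valid and slightly more economical.
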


After proving Theorem \ref{exists} in Section \S \ref{rm}, we proceed to
establishing that $u=\lim_{\epsilon\to0^+} u_\epsilon$ is the
solution to the obstacle problem (\ref{obstacle}). The key step is to
show that $\{u_\epsilon\}$ is equicontinuous up to scale $\epsilon$,
i.e. the functions $u_\epsilon$ may be discontinuous but the
discontinuities are of size roughly $\epsilon$. Then, an  easy
extension of the Arzel\'a-Ascoli theorem from \cite{MPR} shows that
there are subsequences of $\{u_\epsilon\}$ that converge uniformly to
a function $u$. The standard stability of viscosity solutions yields
then that any such limit is a viscosity solution of
(\ref{obstacle}). By uniqueness (see Lemma \ref{uni} and its proof in
the Appendix) they all must agree, and we obtain:

\begin{theorem}\label{introthm}
Let $p\in [2,\infty)$ and let $u_\epsilon:\Omega\cup\Gamma\to\mathbb{R}$ be the unique
$\epsilon$-$p$-superharmonious function solving (\ref{dpp}) with $\alpha =
\frac{p-2}{p+N}$ and $\beta = \frac{2+N}{p+N}$.
Then $u_\epsilon$ converge as $\epsilon \to 0$,
uniformly in $\bar\Omega$, to a continuous function $u$ which is the
unique viscosity solution to the obstacle problem (\ref{obstacle}).
\end{theorem}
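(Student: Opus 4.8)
The plan is to establish Theorem \ref{introthm} in three stages: a uniform bound, equicontinuity up to scale $\epsilon$, and passage to the limit via stability of viscosity solutions. First I would observe that the family $\{u_\epsilon\}$ is uniformly bounded: since $\Psi$ and $F$ are bounded, the dynamic programming principle (\ref{dpp}) together with the comparison/monotonicity that underlies the uniqueness statement in Theorem \ref{exists} forces $\min(\inf\Psi,\inf F)\le u_\epsilon\le\max(\sup\Psi,\sup F)$ on $X$. Indeed the constant function equal to $\max(\sup\Psi,\sup F)$ is a supersolution of (\ref{dpp}) and the constant $\min(\inf\Psi,\inf F)$ is a subsolution, so the comparison principle pins $u_\epsilon$ between them.

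Next comes the core estimate: asymptotic equicontinuity. I would adapt the argument from \cite{MPR} for $\epsilon$-$p$-harmonious functions, accounting for the extra $\max\{\Psi(x),\cdots\}$. The idea is a game-theoretic or coupling argument: interpret $u_\epsilon$ as the value of an optimal-stopping tug-of-war game with noise where one may either stop and collect $\Psi$, or exit $\Omega$ into $\Gamma$ and collect $F$. Running two such games started from nearby points $x,y\in\Omega$ with a common (mirror-type) coupling of the noise and strategies, one controls $|u_\epsilon(x)-u_\epsilon(y)|$ by the probability that the two trajectories fail to couple before exiting, plus the modulus of continuity of $F$ and $\Psi$ along the boundary. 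This yields an estimate of the form $|u_\epsilon(x)-u_\epsilon(y)|\le \omega(|x-y|)+C\epsilon$ for a modulus $\omega$ independent of $\epsilon$. Alternatively, one can argue purely analytically: iterate (\ref{dpp}) and use the known oscillation estimates for the $\epsilon$-$p$-harmonious operator, noting that the $\max$ with $\Psi$ only improves (does not worsen) the oscillation decay since $\Psi$ is Lipschitz. Either way, the conclusion is asymptotic uniform equicontinuity in the sense of \cite{MPR}.

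Having this, I would invoke the Arzel\`a--Ascoli-type compactness theorem from \cite{MPR}: there is a subsequence $\epsilon_j\to0$ along which $u_{\epsilon_j}\to u$ uniformly on $\bar\Omega$, with $u$ continuous and $u=F$ on $\partial\Omega$ (the boundary values pass to the limit because equicontinuity up to scale $\epsilon$ controls the behavior near $\partial\Omega$). Then I would show $u$ is a viscosity solution of (\ref{obstacle}). For the supersolution inequality $-\Delta_p u\ge0$ and $u\ge\Psi$: the bound $u_\epsilon\ge\Psi$ passes to the limit, and if $\phi$ touches $u$ from below at $x_0\in\Omega$, one uses the $\epsilon$-$p$-harmonious mean-value inequality (which holds because $u_\epsilon$ dominates the mean-value expression in (\ref{dpp})) plus a Taylor expansion, exactly as in the derivation of (\ref{plapmean}), to get $-\Delta_p\phi(x_0)\ge0$. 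For the subsolution inequality $\min(-\Delta_p u,\,u-\Psi)\le0$: if $\phi$ touches $u$ from above at $x_0$ and $u(x_0)>\Psi(x_0)$, then for $\epsilon$ small the $\max$ in (\ref{dpp}) is attained by the mean-value branch on a neighborhood, so $u_\epsilon$ is genuinely $\epsilon$-$p$-harmonious there, and the standard expansion gives $-\Delta_p\phi(x_0)\le0$.

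Finally, uniqueness of viscosity solutions to (\ref{obstacle}) — this is Lemma \ref{uni}, proved in the Appendix — shows that the limit $u$ does not depend on the chosen subsequence, hence the full family $u_\epsilon$ converges. The main obstacle is the asymptotic equicontinuity estimate of the second stage: one must show that inserting the obstacle via the pointwise $\max$ in (\ref{dpp}) does not destroy the uniform (in $\epsilon$) modulus of continuity, and near the contact set $\{u=\Psi\}$ one must argue that $u_\epsilon$ inherits the Lipschitz modulus of $\Psi$ rather than developing $\epsilon$-independent jumps. I expect the cleanest route is the coupling/optimal-stopping argument, where stopping can only help the coupling, so the estimate from \cite{MPR} carries over with the obstacle's Lipschitz constant entering the modulus $\omega$.
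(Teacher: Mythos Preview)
Your overall architecture---uniform bound, asymptotic equicontinuity, Arzel\`a--Ascoli via Lemma \ref{AA}, stability of viscosity solutions, and uniqueness via Lemma \ref{uni}---matches the paper exactly. The one genuine divergence is in how you propose to obtain the equicontinuity estimate (ii) of Lemma \ref{AA}.

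The paper does \emph{not} use a two-game coupling. Instead it proceeds in two separate steps. First (Lemma \ref{closetoboundary}) it proves a boundary oscillation estimate: for $y_0\in\partial\Omega$ and $x_0$ nearby, it uses the representation (\ref{value}) and prescribes for the ``controlling'' player the explicit pull-toward-the-exterior-sphere-center strategy $\sigma_{0,II}^n(x_n)=x_n+(\epsilon-\epsilon^3)\frac{z_0-x_n}{|z_0-x_n|}$; a supermartingale argument with a radial barrier $v_0$ solving $\Delta v=-2(N+2)$ on an annulus then bounds $\mathbb{E}[\tau]$ and hence $\mathbb{E}[|x_\tau-y_0|]$. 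Second (Corollary \ref{ascoli}), for interior points $x_0,y_0$ it uses a pure translation-plus-comparison trick: translate the whole problem by $x_0-y_0$, shift $F$ and $\Psi$ by $\eta$, and apply Corollary \ref{comparison}. The Lipschitz regularity of $\Psi$ enters only through (\ref{comppsi}) in this comparison step, not through any coupling.

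Your coupling/mirror-strategy idea is a legitimate alternative route (it is used elsewhere, e.g.\ in the Harnack paper \cite{LPSharnack}), and your remark that the optimal-stopping option can be synchronized across the two games, picking up only the Lipschitz modulus of $\Psi$, is the right intuition. What it buys you is a one-shot estimate without separating boundary from interior; what it costs is that the mirror coupling is delicate precisely near $\partial\Omega$, where the two balls $B_\epsilon(x_n)$, $B_\epsilon(y_n)$ interact differently with $\Gamma$, and one typically still needs a barrier-type boundary argument to close it. The paper's decomposition sidesteps this by isolating the boundary estimate (where the exterior sphere condition and the barrier $v_0$ do the work) from the interior estimate (where translation invariance is available). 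One small correction to your sketch: \cite{MPR} itself uses the boundary-first-then-translation scheme, not coupling, so ``adapt the argument from \cite{MPR}'' actually points to the paper's method rather than yours.

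For the viscosity passage, your outline is correct; note that in the subsolution step you only need the mean-value branch to be active at the single approximate-maximum point $x_\epsilon$ (which follows from $u_\epsilon(x_\epsilon)\to u(x_0)>\Psi(x_0)$), not on a whole neighborhood.
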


Towards the proof, the key estimate (in Lemma \ref{AA}) bounds the oscillation of $u_\epsilon$
in a uniform way. Namely, given $\eta>0$ there are small $r_0, \epsilon_0>0$
so that whenever $\epsilon < \epsilon_0$, then for all  $x_0, y_0\in \overline{\Omega}$:
\begin{equation}\label{key1}
|x_0-y_0|<r_0 \implies |u_\epsilon(x_0) - u_\epsilon(y_0)|<\eta.
\end{equation}
To deduce (\ref{key1}) we use probabilistic techniques, interpreting
$u_\epsilon$ as value functions of certain tug-of-war
games and finding an appropriate extension of  
(\ref{valuelinear}).  In Section \ref{prob2} we present the details of
this construction, involving stochastic processes  (tug-of-war games with
noise) needed to write down the representation formulas for
$u_\epsilon$. For the case of linear equations (that correspond to $p=2$) with
variable coefficients, a similar version of the representation
formula (\ref{value}) below is due to Pham \cite{PH} and \O ksendal-Reikvam
\cite{OR}.   Note that since $\epsilon>0$ is fixed, we omit it in the
statement that follows.  

\begin{theorem}\label{prob}
Let $\alpha, \beta\geq 0$ satisfy $\alpha + \beta=1$.
Let $F:\Gamma\rightarrow \mathbb{R}$ and $\Psi:
\mathbb{R}^N\rightarrow\mathbb{R}$ be two bounded, Borel functions
such that $\Psi\leq F$ in $\Gamma$. Define: 
$$G:X\rightarrow \mathbb{R} \qquad G = \chi_{\Gamma} F
+ \chi_{\Omega} \Psi,$$
where $\chi_{A}$ stands for the characteristic function of a set
$A\subset X$. Define the two value functions:
\begin{equation}\label{value}
u_I(x_0) = \sup_{\tau,\sigma_{I}} \inf_{\sigma_{II}}
\mathbb{E}^{x_0}_{\tau,\sigma_I, \sigma_{II}} [G\circ x_\tau], \qquad 
u_{II}(x_0) = \inf_{\sigma_{II}} \sup_{\tau,\sigma_{I}} 
\mathbb{E}^{x_0}_{\tau,\sigma_I, \sigma_{II}} [G\circ x_\tau],
\end{equation}
where $\sup$ and $\inf$ are taken over all strategies $\sigma_I$,
$\sigma_{II}$ and stopping times $\tau\leq \tau_0$ that do not
superseed the exit time $\tau_0$ from the set $\Omega$. Then:
$$u_I = u = u_{II} \quad \mbox{ in } \Omega,$$
where $u$ is a bounded, Borel function satisfying (\ref{dpp}).
\end{theorem}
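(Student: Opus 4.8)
The plan is to establish the chain of inequalities $u_{I}\le u \le u_{II}$ together with the trivial $u_{I}\le u_{II}$ (from $\sup\inf\le\inf\sup$), and to identify both extremal value functions with the unique solution of the dynamic programming equation \eqref{dpp} guaranteed by Theorem \ref{exists}. The basic object is the two-player tug-of-war game with noise played in $\Omega$: at each step a (biased) coin is tossed; with probability $\alpha$ the winner of the coin toss moves the token to any point in $B_\epsilon(x)$ of their choosing, and with probability $\beta$ the token is displaced by a uniformly distributed vector in $B_\epsilon(0)$. In our variant Player I additionally controls a stopping time $\tau\le\tau_0$; when the game stops the payoff is $G(x_\tau)$, which equals $F$ if the token has exited into $\Gamma$ and equals the obstacle value $\Psi$ if Player I chooses to stop inside $\Omega$. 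Thus Player I's option to stop early is exactly the mechanism producing the $\max\{\Psi,\dots\}$ in \eqref{dpp}.

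First I would show $u_{II}\le u$. Fix $\delta>0$ and let $u$ be the solution to \eqref{dpp}. Define a strategy $\sigma_{II}^{*}$ for Player II: whenever the token is at $x\in\Omega$, move to a point $y\in B_\epsilon(x)$ with $u(y)\le \inf_{B_\epsilon(x)}u + \delta 2^{-k}$ at the $k$-th step. A direct computation using \eqref{dpp} shows that the process $M_k = u(x_k) + \delta 2^{-k}$ is a supermartingale under $\sigma_{II}^{*}$ and any strategy of Player I and any stopping rule $\tau\le\tau_0$: indeed, on $\{x_k\in\Omega\}$,
\begin{equation*}
\mathbb{E}[u(x_{k+1})\mid \mathcal{F}_k] \le \frac{\alpha}{2}\sup_{B_\epsilon(x_k)}u + \frac{\alpha}{2}\Big(\inf_{B_\epsilon(x_k)}u + \delta 2^{-k}\Big) + \beta\fint_{B_\epsilon(x_k)}u \le u(x_k) + \delta 2^{-k},
\end{equation*}
using $u(x_k)\ge$ the averaging expression from \eqref{dpp} and the fact that $u(x_k)\ge\Psi(x_k)$ covers the stopped contribution $G(x_k)=\Psi(x_k)$. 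Optional stopping at $\tau$ (justified since $\tau\le\tau_0$ and $\tau_0$ has finite expectation — the noise component guarantees the token exits any bounded $\Omega$ almost surely in finitely many steps, by a standard argument bounding the expected exit time of the $\beta$-random-walk part) gives $\mathbb{E}^{x_0}[G(x_\tau)]\le u(x_0)+2\delta$. Taking $\sup$ over $\tau,\sigma_I$ and then letting $\delta\to0$ yields $u_{II}(x_0)\le u(x_0)$.

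Symmetrically, for $u_I\ge u$ I would have Player I play the near-optimal strategy $\sigma_I^{*}$ that, at a point $x$ where the averaging expression in \eqref{dpp} exceeds $\Psi(x)$, moves toward the supremum of $u$ on $B_\epsilon(x)$, and choose the stopping rule $\tau^{*}$ that stops as soon as the token reaches a point where $u(x)=\Psi(x)$ (and stops at $\tau_0$ otherwise). Then $u(x_k)$ becomes a submartingale under $\sigma_I^{*},\tau^{*}$ against any $\sigma_{II}$, and optional stopping gives $\mathbb{E}^{x_0}_{\tau^{*},\sigma_I^{*},\sigma_{II}}[G(x_{\tau^{*}})]\ge u(x_0)-2\delta$; taking $\inf$ over $\sigma_{II}$ and $\sup$ over $\sigma_I,\tau$, and $\delta\to0$, gives $u_I(x_0)\ge u(x_0)$. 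Combined with $u_I\le u_{II}$ this closes the loop: $u(x_0)\le u_I(x_0)\le u_{II}(x_0)\le u(x_0)$, whence $u_I=u=u_{II}$, and uniqueness of the function satisfying \eqref{dpp} was already provided by Theorem \ref{exists}. The main obstacle I expect is the rigorous measure-theoretic setup: constructing the probability space of game histories so that strategies are genuinely measurable, the stopping times $\tau$ are adapted, and the expectations in \eqref{value} are well defined; and the careful verification that optional stopping applies, i.e. that $\mathbb{E}^{x_0}[\tau_0]<\infty$ uniformly (so the supermartingale/submartingale inequalities pass to the stopped process) — this is where the nondegeneracy of the noise weight $\beta>0$ is essential, and one should note the degenerate case $\beta=0$ (pure tug-of-war, $p=\infty$) is excluded by hypothesis. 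A secondary technical point is handling the early-stopping option cleanly in the optional-stopping estimates, ensuring that stopping inside $\Omega$ contributes exactly $\Psi\le u$ on the Player II side and exactly $\Psi = u$ on the contact set on the Player I side.
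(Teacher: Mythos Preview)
Your proposal is correct and follows essentially the same route as the paper: show $u_{II}\le u$ by making $u(x_n)+\eta 2^{-n}$ a supermartingale under Player~II's near-optimal (almost-infimizing) strategy, show $u\le u_I$ by making $u(x_n)-\eta 2^{-n}$ a submartingale under Player~I's near-optimal strategy together with the stopping time $\bar\tau=\inf\{n:\,u(x_n)=\Psi(x_n)\}$, and close with $u_I\le u_{II}$. The only point the paper makes more explicit is the construction of Borel-measurable almost-optimal selectors (its Lemma~\ref{selection}), which is exactly the measurability issue you flagged; also, the paper justifies optional stopping via uniform boundedness of $u\circ x_{\tau\wedge n}$ together with $\mathbb{P}(\tau<\infty)=1$ rather than via $\mathbb{E}[\tau_0]<\infty$, but either suffices.
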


The core of this paper can be found in Section \S \ref{bo2}, where we use the
representation formulas from Theorem \ref{prob} to establish the
oscillation estimate (\ref{key1}). We provide full details of the
proof for the equicontinuity estimates 
in Section \S 4, the proof of Theorem \ref{introthm} in Section \S 5, and
the fact that our games end almost surely in Section \S 6.  

\bigskip

We finish this introduction by discussing other notions of solutions 
for (\ref{introplap}) in addition to (\ref{plapmean}):

\begin{itemize}
\item[i)] \textbf{Weak (or Sobolev) supersolutions:} These are
  functions $v\in W^{1,p}_\textrm{loc}(\Omega)$ such that: 
$$\int_{\Omega} \langle|\nabla v|^{p-2} \nabla v, \nabla \phi\rangle~\mbox{d}x \ge 0$$ 
for all test functions $\phi \in \mathcal{C}^{\infty}_0(\Omega, \mathbb{R}_+)$ that are
non-negative in $\Omega$.  
\smallskip
\item[ii)] \textbf{Potential theoretic supersolutions or
    $p$-superharmonic functions:} A lower-semi\-con\-ti\-nuous function
  $v:\Omega\to\mathbb{R}\cup\{\infty\}$ is $p$-superharmonic
  if it is not identically $\infty$ on any connected component of
  $\Omega$ and it satisfies the comparison principle with respect to
  $p$-harmonic functions, that is: if $D\Subset\Omega$, and $w\in
  \mathcal{C}(\bar{D})$ is $p$-harmonic in $D$ satisfying $w\le v$ on
  $\partial{D }$, then we must have:  $w\le v$ on $D$.  
\smallskip
\item[iii)] \textbf{Viscosity supersolutions:}  A lower-semicontinuous
  function $v:\Omega\to\mathbb{R}\cup\{\infty\}$ is a
  viscosity $p$-supersolution if it is not identically $\infty$ on any
  connected component of $\Omega$,  and if
whenever $\phi \in \mathcal{C}^{\infty}_0(\Omega)$ is such that $\phi(x)\le
v(x)$ for all $x\in \Omega$ with equality at one point  
$\phi(x_0)=v(x_0)$ ($\phi$ touches $v$ from below at $x_0$), and
$\nabla\phi(x_0)\not=0$, then we have:
$$ -\Delta_p \phi (x_0) \ge 0.$$
\end{itemize}

\medskip

The fact that weak supersolutions are potential theoretic and
viscosity supersolutions follows from the comparison principle and a
regularity argument implying the lower-semicontinuity; see for example
Chapter 3 in \cite{HKM}. 
The fact that bounded $p$-superharmonic functions are weak
supersolutions was established by Lindqvist in \cite{L}.\footnote{In
  fact, this is also the first reference that we have been able to
  locate for the classical case  $p=2$.} Note that an
arbitrary, not necessarily bounded $p$-superharmonic function $v$ is
always the pointwise increasing limit of bounded $p$-superharmonic
functions $v_n=\min\{v, n\}$.  The equivalence between viscosity
supersolutions and $p$-superharmonic functions was established in 
\cite{JLM}. Therefore, the three notions of supersolution agree on the
class of bounded functions; see also the Appendix where for
completeness we present the folklore argument stating that viscosity solutions are
weak solutions and thus they are unique.

\medskip 

Finally, we recall a classical useful observation. When $u\in \mathcal{C}^2$
and $\nabla u(x)\neq 0$, then one can express  the $p$-Laplacian as a
combination of the ordinary Laplacian and the $\infty$-Laplacian:
\begin{equation}\label{plapla} 
\Delta_p u (x)= |\nabla u|^{p-2}\big(\Delta u (x)+ (p-2)\Delta_\infty u(x)\big),
\end{equation}
where:
\begin{equation}\label{inflapla}
\Delta_\infty u(x) = \Big\langle \nabla^2 u(x) \frac{\nabla u(x)}{|\nabla
u(x)|}, \frac{\nabla u(x)}{|\nabla u(x)|}\Big\rangle. 
\end{equation}
The tug-of-war interpretation of the $\infty$-Laplacian has been
developed in the fundamental paper \cite{PSSW}. The obstacle problem
for (\ref{inflapla}) has been studied in \cite{MRS}. A similar
treatment as in the present paper, for the double obstacle problem,
has been developed in \cite{CLM}.

\bigskip

\noindent{\bf Acknowledgments.}
The first author was partially supported by  NSF awards DMS-0846996
and DMS-1406730. The second author was partially supported by NSF award DMS-1001179.

\section{$\epsilon$-$p$-superharmonious functions: a proof of Theorem \ref{exists}}\label{rm}

The proof uses the monotonicity arguments of the Perron method as
extended by  \cite{LPS}, modified to accommodate the obstacle
constraint. 

\smallskip

{\bf 1.}  The solution to (\ref{dpp}) will be obtained as the uniform limit of
iterations $u_{n+1} = Tu_n$, where, for any bounded Borel function
$v:X\rightarrow \mathbb{R}$, we define:
\begin{equation}\label{dppT}
Tv(x) = \left\{\begin{array}{ll}{\displaystyle \max\left\{\Psi(x),
      \frac{\alpha}{2} \sup_{B_\epsilon(x)} v + \frac{\alpha}{2}
      \inf_{B_\epsilon(x)} v +\beta\fint_{B_\epsilon(x)} v \right\}} & \mbox{ for }
    x\in\Omega\vspace{2mm}\\
v(x) & \mbox{ for } x\in\Gamma
\end{array}\right.
\end{equation}
and where we put:
\begin{equation}\label{dpp0}
u_0 = \chi_{\Gamma} F + \chi_\Omega \left(\inf_X\Psi\right).
\end{equation}

We easily note that $u_1 = Tu_0\geq u_0$ in
$X$. Consequently: $u_2=Tu_1\geq Tu_0 = u_1$ in $X$ and, by induction,
the sequence of Borel functions $\{u_n\}$ is nondecreasing in $X$. 
Also, $\{u_n\}_{n=1}^\infty$ satisfies:
$$ \Psi\leq u_n \leq \max\left\{\sup_{\Gamma} F, \sup_X \Psi\right\} \quad
\mbox{in } \Omega,$$
and clearly $u_n = F$ on $\Gamma$. Therefore, the sequence
converges pointwise to a bounded Borel function
$u:X\rightarrow\mathbb{R}$, satisfying: $u_{|\Gamma} = F$.

\smallskip

{\bf 2.} We now show that the convergence of $\{u_n\}$ to $u$ is
uniform in $X$. This will automatically imply that $u=\lim Tu_n =
T(\lim u_n) = Tu$, and hence yield the desired existence result.
We argue by contradiction and assume that:
$$M = \lim_{n\to\infty} \sup_{x\in X} (u-u_n)(x) > 0.$$
Fix a small $\delta>0$ and take $n>1$ so that:
$$ \sup_X ~(u-u_n) < M+\delta \quad \mbox{ and } \quad \forall x\in\Omega \quad
\beta\fint_{B_\epsilon(x)} u-u_n \leq \frac{\beta}{|B_\epsilon(x)|}
\int_X u-u_n < \delta.$$
The second condition above is justified by the monotone convergence
theorem.

Take now $x_0\in\Omega$ satisfying: $u(x_0) -
u_{n+1}(x_0)>M-\delta$. Note that if $u(x_0) = \Psi(x_0)$ then since
$u_j(x_0)$ increases to $u(x_0)$ and all $u_j(x_0)\geq \Psi(x_0)$, there
would be $u_n(x_0) = \Psi(x_0)$. Therefore $u(x_0)>\Psi(x_0)$ and
consequently:
$$\exists m>n \quad u_{m+1}(x_0) - u_{n+1}(x_0) > M-2\delta \quad 
\mbox{ and } \quad  u_{m+1}(x_0) > \Psi(x_0).$$
We now compute:
\begin{equation}\label{calc}
\begin{split}
M-2\delta & < u_{m+1}(x_0) - u_{n+1}(x_0) \\ & = \frac{\alpha}{2}
\left(\sup_{B_\epsilon(x_0)} u_m - \sup_{B_\epsilon(x_0)} u_n\right) + 
\frac{\alpha}{2} \left(\inf_{B_\epsilon(x_0)} u_m -
  \inf_{B_\epsilon(x_0)} u_n\right) +
\beta\fint_{B_\epsilon(x_0)} u_m - u_n \\ & \leq \alpha 
\sup_{B_\epsilon(x_0)} (u_m - u_n) 
+ \beta\fint_{B_\epsilon(x_0)} u_m - u_n
\leq \alpha \sup_{B_\epsilon(x_0)} (u - u_n) 
+ \beta\fint_{B_\epsilon(x_0)} u - u_n \\ & < \alpha (M+\delta) + \delta =
\alpha M + (\alpha+1)\delta.
\end{split}
\end{equation}
This implies that $M<\alpha M + (\alpha+3)\delta$, which
clearly is a contradiction for $\delta$ sufficiently small,
since $\alpha<1$.

\smallskip

{\bf 3.} We now prove uniqueness of solutions to (\ref{dpp}). Assume,
by contradiction, that $u$ and $\bar u$ are two distinct solutions and denote:
$$M=\sup_\Omega (u-\bar u) > 0.$$
Let $\{x_n\}_{n\geq 1}$ be a sequence of points in $\Omega$ such that
$\lim_{n\to\infty} (u-\bar u)(x_n) = M$. Observe that for large $n$ there must be:
$u(x_n) > \Psi(x_n)$, since $M>0$. Therefore, as in (\ref{calc}), we get:
\begin{equation*}
\begin{split}
(u - \bar u)(x_n) & = \frac{\alpha}{2}
\left(\sup_{B_\epsilon(x_n)} u - \sup_{B_\epsilon(x_n)} \bar u\right) + 
\frac{\alpha}{2} \left(\inf_{B_\epsilon(x_n)} u -
  \inf_{B_\epsilon(x_n)} \bar u\right) +
\beta\fint_{B_\epsilon(x_n)} u - \bar u \\ & \leq \alpha 
\sup_{B_\epsilon(x_n)} (u - \bar u) 
+ \beta\fint_{B_\epsilon(x_n)} u - \bar u
\leq \alpha M + \beta\fint_{B_\epsilon(x_n)} u - \bar u.
\end{split}
\end{equation*}
Passing to the limit with $n\to\infty$, where $\lim x_n = x_0$, we obtain:
$$ M\leq \alpha M +\beta \fint_{B_\epsilon (x_0)} u-\bar u,$$
and hence: $M\leq \fint_{B_\epsilon(x_0)} u-\bar u, $ since
$\beta>0$. Consequently:
$u-\bar u=M$ almost everywhere in $B_\epsilon(x_0)$, and hence in
particular the following set is nonempty:
$$G=\{x\in X;~ (u-\bar u)(x) = M\} \neq \emptyset.$$
By the same argument as above, we see that in fact for all $x\in G$, the set 
$B_\epsilon(x) \setminus G$ has measure $0$. We conclude that: 
$$u-\bar u = M \qquad \mbox{a.e. in } X $$
which contradicts the fact that $G\cap\Gamma=\emptyset$, and
proves the result.
\endproof

\medskip

We further easily derive the following weak comparison principle:

\begin{corollary}\label{comparison}
Let $u$ and $\bar u$ be the unique solutions to (\ref{dpp}) with the
respective boundary data $F$ and $\bar F$ and obstacle constraints
$\Psi$ and $\bar\Psi$, satisfying the assumptions of Theorem
\ref{exists}. If $F\leq \bar F$ and $\Psi\leq \bar\Psi$ then $u\leq
\bar u$ in $\Omega$. 
\end{corollary}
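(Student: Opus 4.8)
The plan is to mimic the contradiction/monotonicity argument used in the uniqueness part of Theorem~\ref{exists}. Suppose $F\le\bar F$ on $\bar\Gamma$ and $\Psi\le\bar\Psi$ on $\mathbb{R}^N$, but that $M=\sup_\Omega(u-\bar u)>0$. Since $u=F\le\bar F=\bar u$ on $\Gamma$, this supremum can only be attained (in the limiting sense) at points of $\Omega$. Pick a sequence $\{x_n\}\subset\Omega$ with $(u-\bar u)(x_n)\to M$. For $n$ large, $(u-\bar u)(x_n)>0=M-M$, so in particular $u(x_n)>\bar u(x_n)\ge\bar\Psi(x_n)\ge\Psi(x_n)$; hence at $x_n$ the function $u$ is \emph{not} pinned to the obstacle and its dynamic-programming identity reads
$$u(x_n)=\frac{\alpha}{2}\sup_{B_\epsilon(x_n)}u+\frac{\alpha}{2}\inf_{B_\epsilon(x_n)}u+\beta\fint_{B_\epsilon(x_n)}u.$$
For $\bar u$ we only have the inequality $\bar u(x_n)\ge\frac{\alpha}{2}\sup_{B_\epsilon(x_n)}\bar u+\frac{\alpha}{2}\inf_{B_\epsilon(x_n)}\bar u+\beta\fint_{B_\epsilon(x_n)}\bar u$ (coming from the $\max$ in \eqref{dpp}), but that inequality points in the right direction for subtraction.

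Subtracting, using $\sup f-\sup g\le\sup(f-g)$, $\inf f-\inf g\le\sup(f-g)$, and $\alpha+\beta=1$, I get
$$(u-\bar u)(x_n)\le\alpha\sup_{B_\epsilon(x_n)}(u-\bar u)+\beta\fint_{B_\epsilon(x_n)}(u-\bar u)\le\alpha M+\beta\fint_{B_\epsilon(x_n)}(u-\bar u).$$
Passing to the limit along a subsequence with $x_n\to x_0\in\bar\Omega$ (and using that $u-\bar u\le M$ everywhere together with dominated convergence for the averaged integral) yields $M\le\alpha M+\beta\fint_{B_\epsilon(x_0)}(u-\bar u)$, hence $M\le\fint_{B_\epsilon(x_0)}(u-\bar u)$ since $\beta>0$. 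As $u-\bar u\le M$ a.e., this forces $u-\bar u=M$ a.e.\ on $B_\epsilon(x_0)$, so the set $G=\{x\in X:\ (u-\bar u)(x)=M\}$ is nonempty. Running the same computation at any $x\in G\cap\Omega$ (where again $u(x)>\bar u(x)\ge\Psi(x)$, so $u$ satisfies the equation at $x$), I conclude $B_\epsilon(x)\setminus G$ has measure zero, and propagating this through $\Omega$ and into $\Gamma$ gives $u-\bar u=M>0$ somewhere on $\Gamma$, contradicting $u\le\bar u$ there.

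The one point that needs a touch of care — and the closest thing to an obstacle — is the very first reduction: one must make sure the extremizing sequence lives in $\Omega$ rather than on $\Gamma$, which is immediate since $u-\bar u\le0$ on $\Gamma$ and $M>0$; and then the observation that at a near-extremal point $u$ cannot be touching the obstacle. This is exactly the role played by the step ``for large $n$ there must be $u(x_n)>\Psi(x_n)$'' in the proof of Theorem~\ref{exists}, now strengthened via $\bar u\ge\bar\Psi\ge\Psi$. Everything else is the bookkeeping of the sup/inf/average inequalities already carried out in \eqref{calc}, so the corollary follows with essentially no new ideas.
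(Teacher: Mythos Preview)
Your argument is correct: adapting the uniqueness contradiction from Theorem~\ref{exists} works, and the key observation that $u(x_n)>\bar u(x_n)\geq\bar\Psi(x_n)\geq\Psi(x_n)$ forces $u$ off the obstacle at near-extremal points is exactly what makes the subtraction go through.

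That said, the paper proceeds very differently and much more briefly. It does not argue by contradiction at the level of the solutions $u,\bar u$ themselves; instead it exploits the \emph{constructive} part of Theorem~\ref{exists}. The approximating sequences start from $u_0=\chi_\Gamma F+\chi_\Omega(\inf_X\Psi)$ and $\bar u_0=\chi_\Gamma\bar F+\chi_\Omega(\inf_X\bar\Psi)$, so $F\leq\bar F$ and $\Psi\leq\bar\Psi$ give $u_0\leq\bar u_0$ immediately. Because the iteration operators $T_\Psi$ and $T_{\bar\Psi}$ are monotone and $\Psi\leq\bar\Psi$, one gets $u_n\leq\bar u_n$ for every $n$ by induction, and the inequality passes to the uniform limits. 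This is a three-line proof. Your route has the advantage of being independent of the particular approximation scheme (it would work for any pair of solutions to~\eqref{dpp}, however obtained), while the paper's route has the advantage of being almost immediate once the existence machinery is in place.
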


\begin{proof}
Let $\{u_n\}$ and $\{\bar u_n\}$ be the approximating sequences for
$u$ and $\bar u$, as in the proof of Theorem \ref{exists}. By
(\ref{dpp0}): $u_0\leq \bar u_0$, which results in $u_n\leq \bar u_n$
for every $n$, in view of (\ref{dpp}). Consequently, the limits $u$
and $\bar u$ satisfy the same pointwise inequality.
\end{proof}

\section{Game-theoretical interpretation of the
  $\epsilon$-$p$-superharmonious functions}\label{prob2}

We now link the $\epsilon$-$p$-superharmonious function $u$ solving (\ref{dpp}) to the probabilistic
setting.  We define this setting in detail, as this paper is dedicated to
analysts rather than probabilists. All the basic concepts can be found
in the classical textbook \cite{Var}.

\subsection{The measure spaces $(X^{\infty, x_0},
  \mathcal{F}_n^{x_0})$ and $(X^{\infty, x_0},  \mathcal{F}^{x_0})$.} \label{start}
Fix any $x_0\in X$ and consider the
space of infinite sequences $\omega$ (recording positions of token during the game),
starting at $x_0$:
$$X^{\infty, x_0} = \{\omega=(x_0, x_1, x_2\ldots); ~x_n\in X \mbox{
  for all } n\geq 1\}.$$
For each $n\geq 1$, let $\mathcal{F}_n^{x_0}$ be the $\sigma$-algebra
of subsets of $X^{\infty, x_0}$, containing sets of the form:
$$A_1\times \ldots \times A_n := \{\omega\in X^{\infty, x_0}; ~ x_i\in 
A_i \mbox{ for } i:1\ldots n\}, $$
for all $n$-tuples of Borel sets $A_1,\ldots, A_n\subset X$. Although the expression in the left
hand side above is, formally, a Borel subset of $\mathbb{R}^{Nn}$, we will, with a slight
abuse of notation, identify it with the set of infinite histories
$\omega$ with completely undetermined positions beyond $n$.
Let $\mathcal{F}^{x_0}$ be now defined as the smallest $\sigma$-algebra of subsets of
$X^{\infty, x_0}$, containing $\bigcup_{n=1}^\infty \mathcal{F}_n^{x_0}$. Clearly, 
the increasing sequence $\{\mathcal{F}_n^{x_0}\}_{n\geq 1}$ is a
filtration of $\mathcal{F}^{x_0}$, and the coordinate projections
$x_n(\omega) = x_n$ are $\mathcal{F}^{x_0}$ measurable (random
variables) on $X^{\infty, x_0}$.

\subsection{The stopping times $\tau_0$ and $\tau$.} 
Define the exit time from the set $\Omega$:
$$\tau_0(\omega) = \min\{n\geq 0; ~ x_n\in\Gamma\}$$
where we adopt the convention that the minimum over the empty set equals
$+\infty$. This way: $\tau_0: X^{\infty, x_0}\to \mathbb{N}\cup
\{+\infty\}$ is $\mathcal{F}^{x_0}$ measurable and, in fact, it is a
stopping time with respect to the filtration
$\{\mathcal{F}_n^{x_0}\}$, that is:
\begin{equation}\label{stop}
\forall n\geq 0 \qquad \{\omega\in X^{\infty, x_0}; ~
\tau_0(\omega)\leq n\} \in \mathcal{F}_n^{x_0}.
\end{equation}
Let now $\tau: X^{\infty, x_0}\to \mathbb{N}\cup \{+\infty\}$ be any
stopping time (i.e. a random variable satisfying (\ref{stop}), where
$\tau_0$ is replaced by $\tau$) such that:
\begin{equation}\label{stop2}
\tau\leq \tau_0.
\end{equation}
For $n\geq 1$ we define the Borel sets:
$$A_n^\tau = \{(x_0, x_1, \ldots, x_n); ~\exists
\omega = (x_0, x_1, \dots, x_n, x_{n+1},\ldots)\in X^{\infty, x_0},
\quad \tau(\omega)\leq n\}. $$
By (\ref{stop2}), it follows that $(x_0,\ldots, x_n)\in
A_n^\tau$ whenever $x_n\in\Gamma$.

\subsection{The strategies $\sigma_{I}$, $\sigma_{II}$.} For every $n\geq 1$,
let $\sigma_I^n, \sigma_{II}^n:X^{n+1}\to X$ be Borel measurable
functions with the property that:
$$ \sigma_I^n(x_0, x_1,\ldots, x_n), ~\sigma_{II}^n(x_0, x_1,\ldots, x_n)\in  B_\epsilon (x_n)\cap X. $$
 We call $\sigma_I = \{\sigma_I^n\}_{n\geq 1}$ and $\sigma_{II} =
 \{\sigma_{II}^n\}_{n\geq 1}$ the strategies of Players I and II, respectively.

\subsection{The probability measure
  $\mathbb{P}^{x_0}_{\tau,\sigma_{I}, \sigma_{II}}$.} \label{probab}
Fix two parameters $\alpha, \beta\geq0$, such that: 
$\alpha+\beta=1$. Given $\tau, \sigma_{I}, \sigma_{II}$ as above, we
define now a family of probabilistic (Borel) measures on $X$,
parametrised by the finite histories $(x_0,\ldots, x_n)$:
\begin{equation}\label{gamma}
\begin{split}
& \forall n\geq 1 \quad \forall x_1, \ldots, x_n\in X\qquad  
\gamma_n[x_0, x_1, \ldots, x_n] = \\ & \qquad = \left\{ \begin{array}{ll}
\displaystyle{\frac{\alpha}{2} \delta_{\sigma_I^n(x_0, x_1, \ldots, x_n)} +
\frac{\alpha}{2} \delta_{\sigma_{II}^n(x_0, x_1, \ldots, x_n)} + 
\beta \frac{\mathcal{L}_N\lfloor B_\epsilon(x_n)}{|B_\epsilon(x_n)|} }
& \mbox{ when } (x_0,  \ldots, x_n)\not\in A_n^\tau\vspace{2mm}\\ 
\delta_{x_n} & \mbox{ otherwise}
\end{array}\right.
\end{split}
\end{equation}
where $\delta_y$ denotes the Dirac delta at a given $y\in X$, while
the measure multiplied by $\beta$ above stands for the $N$-dimensional
Lebesgue measure restricted to the ball $B_\epsilon(x_n)$ and
normalized by the volume of this ball. Note that since $\tau\leq
\tau_0$, then $\gamma_n[x_0, x_1, \ldots, x_n] = \delta_{x_n}$
whenever $x_n\in\Gamma$.

For every $n \geq 1$ we now define the probability measure 
$\mathbb{P}^{n,x_0}_{\tau,\sigma_{I}, \sigma_{II}}$ on $(X^{\infty, x_0},
  \mathcal{F}_n^{x_0})$ by setting:
\begin{equation}\label{partial}
\mathbb{P}^{n, x_0}_{\tau,\sigma_{I}, \sigma_{II}} (A_1\times \ldots
\times A_n) = \int_{A_1} \ldots \int_{A_n} 1 ~\mbox{d}\gamma_{n-1}[x_0,
x_1, \ldots, x_{n-1}] \ldots \mbox{d}\gamma_{0}[x_0]  
\end{equation}
for every $n$-tuple of Borel sets $A_1,\ldots, A_n\subset X$. 
Here, $A_1$ is interpreted as the set of possible successors $x_1$ of
the initial position $x_0$, which we integrate
$\mbox{d}\gamma_{0}[x_0]$, while $x_n\in A_n$ is a possible 
successor of $x_{n-1}$ which we integrate $\mbox{d}\gamma_{n-1}[x_0,
x_1, \ldots, x_{n-1}]$, etc. 
The following observation justifies the definition (\ref{partial}): 
\begin{lemma}
The family $\{\gamma_n[x_0, x_1, \ldots, x_n] \}$ in (\ref{gamma}) has the following
measurability property. For every $n\geq 1$ and every Borel set
$A\subset X$, the function:
$$X^{n+1}\ni (x_0, x_1, \ldots, x_n) \mapsto \gamma_n[x_0, x_1, \ldots,
x_n](A)\in\mathbb{R}$$
is Borel measurable.
\end{lemma}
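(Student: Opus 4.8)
The plan is to prove the claimed Borel measurability of $(x_0,\dots,x_n)\mapsto \gamma_n[x_0,\dots,x_n](A)$ by splitting according to the definition \eqref{gamma}, which is a case distinction governed by the Borel set $A_n^\tau\subset X^{n+1}$. On the complement $X^{n+1}\setminus A_n^\tau$ the measure is the convex combination
$$\frac{\alpha}{2}\delta_{\sigma_I^n(x_0,\dots,x_n)} + \frac{\alpha}{2}\delta_{\sigma_{II}^n(x_0,\dots,x_n)} + \beta\,\frac{\mathcal{L}_N\lfloor B_\epsilon(x_n)}{|B_\epsilon(x_n)|},$$
while on $A_n^\tau$ it is simply $\delta_{x_n}$. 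So it suffices to check measurability of each of the three building blocks separately, and then to glue them using the indicator functions $\chi_{A_n^\tau}$ and $\chi_{X^{n+1}\setminus A_n^\tau}$, which are Borel since $A_n^\tau$ is a Borel set (this is part of the setup in \S 2.2).

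First I would handle the Dirac-mass terms. For a fixed Borel set $A\subset X$, the map $y\mapsto \delta_y(A) = \chi_A(y)$ is Borel measurable on $X$; composing with the Borel measurable strategy maps $\sigma_I^n,\sigma_{II}^n: X^{n+1}\to X$ gives that $(x_0,\dots,x_n)\mapsto \chi_A(\sigma_I^n(x_0,\dots,x_n))$ and the analogous expression with $\sigma_{II}^n$ are Borel. Likewise $(x_0,\dots,x_n)\mapsto \delta_{x_n}(A) = \chi_A(x_n)$ is Borel, being the composition of the (continuous) coordinate projection $(x_0,\dots,x_n)\mapsto x_n$ with $\chi_A$. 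The only term requiring a genuine argument is the normalized Lebesgue term
$$g_A(x_n) := \frac{\mathcal{L}_N\big(B_\epsilon(x_n)\cap A\big)}{|B_\epsilon(x_n)|} = \frac{1}{\omega_N\epsilon^N}\int_{\mathbb{R}^N}\chi_{B_\epsilon(x_n)}(z)\,\chi_A(z)\,\mathrm{d}z,$$
and I would show $x\mapsto g_A(x)$ is Borel (in fact it is even continuous when $A$ is, say, open, but Borel is all that is needed and all one can expect for general Borel $A$). The clean way is a monotone-class / Dynkin argument: the collection $\mathcal{D}$ of Borel sets $A$ for which $x\mapsto g_A(x)$ is Borel contains all open sets (for open $A$, $g_A$ is lower semicontinuous by Fatou, hence Borel), is closed under complements within $X$ (since $g_{X\setminus A}(x) = 1 - g_A(x)$ when $B_\epsilon(x)\subset X$; one restricts attention to such $x$, or more carefully works with $A\subset X$ and notes $g_{X}$ itself is Borel and subtract), and is closed under countable increasing unions by the monotone convergence theorem applied under the integral sign. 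By the $\pi$–$\lambda$ theorem, $\mathcal{D}$ equals the full Borel $\sigma$-algebra of $X$.

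Having established measurability of each of the three blocks as a function of $(x_0,\dots,x_n)\in X^{n+1}$, I would finish by writing, for $(x_0,\dots,x_n)\in X^{n+1}$,
$$\gamma_n[x_0,\dots,x_n](A) = \chi_{X^{n+1}\setminus A_n^\tau}(x_0,\dots,x_n)\Big(\tfrac{\alpha}{2}\chi_A(\sigma_I^n) + \tfrac{\alpha}{2}\chi_A(\sigma_{II}^n) + \beta\, g_A(x_n)\Big) + \chi_{A_n^\tau}(x_0,\dots,x_n)\,\chi_A(x_n),$$
which exhibits the function as a finite sum of products of Borel measurable functions, hence Borel measurable. I expect the only real obstacle is the Lebesgue-term measurability $x\mapsto g_A(x)$ for general Borel $A$; everything else is bookkeeping. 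One subtlety worth flagging in the write-up: for the complement step of the Dynkin argument one should work relative to $X$ (i.e. treat $\chi_A$ for $A\subset X$ as extended by zero outside $X$) and use that $g_X(x) = \mathcal{L}_N(B_\epsilon(x)\cap X)/|B_\epsilon(x)|$ is itself Borel — indeed continuous — so that $g_{X\setminus A} = g_X - g_A$ stays in $\mathcal{D}$; alternatively, one can simply verify directly that $g_A$ is Borel by noting that $(x,z)\mapsto \chi_{B_\epsilon(x)}(z)\chi_A(z)$ is jointly Borel on $\mathbb{R}^N\times\mathbb{R}^N$ and invoking the Fubini–Tonelli theorem, which guarantees that the partial integral $x\mapsto \int \chi_{B_\epsilon(x)}(z)\chi_A(z)\,\mathrm{d}z$ is Borel measurable — this is perhaps the most economical route and avoids the Dynkin argument altogether.
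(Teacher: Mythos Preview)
Your proof is correct. The paper does not actually prove this lemma; it is stated as a routine observation justifying the subsequent construction, with no argument given. Your write-up therefore supplies what the paper leaves implicit, and the decomposition you give --- splitting on the Borel set $A_n^\tau$, handling the Dirac terms by composition with the Borel strategies and the coordinate projection, and handling the normalized Lebesgue term via Fubini--Tonelli applied to the jointly Borel integrand $(x,z)\mapsto \chi_{\{|x-z|<\epsilon\}}(z)\chi_A(z)$ --- is exactly the natural verification. The Fubini--Tonelli route you mention at the end is indeed the cleanest; the Dynkin alternative works too but is unnecessary once you observe joint measurability.
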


\smallskip

It is clear that the family $\{\mathbb{P}^{n, x_0}_{\tau,\sigma_{I}, \sigma_{II}}\}_{n\geq 1}$ is
consistent (see \cite{Var}), with the transition probabilities $\gamma_n[x_0, x_1, \ldots,
x_n]$. Consequently, in virtue of the Kolmogoroff's Consistency Theorem, it
generates uniquely the probability measure
$\mathbb{P}^{x_0}_{\tau,\sigma_{I},
  \sigma_{II}}=\lim_{n\to\infty}\mathbb{P}^{n, x_0}_{\tau,\sigma_{I},
  \sigma_{II}}$ on $(X^{\infty, x_0}, \mathcal{F}_n)$ so that:
$$\forall n\geq 1\quad \forall A_1\times \ldots \times A_n\in
\mathcal{F}_n^{x_0} 
\qquad \mathbb{P}^{x_0}_{\tau,\sigma_{I},
  \sigma_{II}} (A_1\times \ldots \times A_n) 
= \mathbb{P}^{n, x_0}_{\tau,\sigma_{I}, \sigma_{II}}  (A_1\times \ldots \times A_n).$$ 
One can easily prove the following useful observation, which follows
by directly checking the definition of conditional expectation:
\begin{lemma}\label{condi}
Let $v:X\rightarrow \mathbb{R}$ be a bounded Borel function. For any $n\geq 1$,
the conditional expectation $\mathbb{E}^{x_0}_{\tau,\sigma_{I},
  \sigma_{II}}\{v\circ x_n\mid \mathcal{F}^{x_0}_{n-1}\}$
of the random variable $v\circ x_n$  is a  $\mathcal{F}^{x_0}_{n-1}$
measurable function on $X^{\infty, x_0}$ (and hence it depends only on
the initial $n$ positions in the history $\omega =
(x_0, x_1,\ldots )\in X^{\infty, x_0}$), given by:
$$\mathbb{E}^{x_0}_{\tau,\sigma_{I},
  \sigma_{II}}\{v\circ x_n\mid \mathcal{F}^{x_0}_{n-1}\} (x_0, \ldots,x_{n-1}) 
= \int_X v ~\mbox{d}\gamma_{n-1}[x_0,\ldots, x_{n-1}].$$
\end{lemma}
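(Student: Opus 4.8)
The plan is to prove Lemma \ref{condi} directly from the definition of conditional expectation: one must verify that the proposed function, call it $h(\omega) = \int_X v~\mathrm{d}\gamma_{n-1}[x_0,\ldots,x_{n-1}]$, is (a) $\mathcal{F}^{x_0}_{n-1}$-measurable, and (b) satisfies the defining averaging identity $\int_A h ~\mathrm{d}\mathbb{P}^{x_0}_{\tau,\sigma_I,\sigma_{II}} = \int_A (v\circ x_n)~\mathrm{d}\mathbb{P}^{x_0}_{\tau,\sigma_I,\sigma_{II}}$ for every $A\in\mathcal{F}^{x_0}_{n-1}$. Since conditional expectation is characterized uniquely (up to null sets) by these two properties, this suffices.

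For step (a), I would note that $h$ depends on $\omega$ only through the coordinates $(x_0,\ldots,x_{n-1})$, hence it is the pullback under the projection $X^{\infty,x_0}\to X^{n}$ of the function $(x_0,\ldots,x_{n-1})\mapsto \int_X v~\mathrm{d}\gamma_{n-1}[x_0,\ldots,x_{n-1}]$ on $X^n$; this latter function is Borel measurable by the preceding (unnumbered) Lemma on measurability of the family $\{\gamma_n\}$ applied to $v$ (first for indicator functions $v=\chi_A$, then by linearity and monotone limits for bounded Borel $v$), and the projection is $\mathcal{F}^{x_0}_{n-1}$-measurable by construction of the filtration. Hence $h$ is $\mathcal{F}^{x_0}_{n-1}$-measurable.

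For step (b), by a standard generating-class (Dynkin $\pi$–$\lambda$) argument it is enough to check the averaging identity on the generating family of cylinder sets $A = A_1\times\cdots\times A_{n-1}$ with $A_1,\ldots,A_{n-1}\subset X$ Borel (the successor slots for $x_n, x_{n+1},\ldots$ being all of $X$). For such $A$ both sides reduce to finite iterated integrals against $\gamma_0[x_0],\ldots,\gamma_{n-1}[x_0,\ldots,x_{n-1}]$ by the explicit formula (\ref{partial}): on the left,
\begin{equation*}
\int_{A_1}\cdots\int_{A_{n-1}} \Big(\int_X v~\mathrm{d}\gamma_{n-1}[x_0,\ldots,x_{n-1}]\Big)~\mathrm{d}\gamma_{n-2}[x_0,\ldots,x_{n-2}]\cdots\mathrm{d}\gamma_0[x_0],
\end{equation*}
and on the right, writing $v\circ x_n$ and integrating $x_n$ over all of $X$ against $\gamma_{n-1}$ exactly reproduces the same inner integral $\int_X v~\mathrm{d}\gamma_{n-1}[x_0,\ldots,x_{n-1}]$. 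Thus the two expressions coincide identically, and the identity extends from cylinders to all of $\mathcal{F}^{x_0}_{n-1}$.

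The only genuinely delicate point is the measurability in step (a): one needs the map $(x_0,\ldots,x_{n-1})\mapsto\int_X v~\mathrm{d}\gamma_{n-1}[x_0,\ldots,x_{n-1}]$ to be Borel for bounded Borel $v$. This follows from the measurability lemma preceding the statement — which covers $v=\chi_A$ — upgraded to general bounded Borel $v$ by the usual monotone-class/approximation step, combined with the observation that the three pieces of $\gamma_{n-1}$ (the two Dirac masses, which compose $v$ with the Borel strategy maps $\sigma_I^{n-1},\sigma_{II}^{n-1}$, and the normalized Lebesgue average $\fint_{B_\epsilon(x_{n-1})} v$, which depends continuously on $x_{n-1}$ once $v$ is, say, continuous and then measurably in general) are each jointly Borel; the case distinction in (\ref{gamma}) according to whether $(x_0,\ldots,x_{n-1})\in A_{n-1}^\tau$ is harmless since $A_{n-1}^\tau$ is Borel and the stopped measure $\delta_{x_{n-1}}$ is trivially handled. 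Everything else is a routine unwinding of (\ref{partial}) and (\ref{gamma}).
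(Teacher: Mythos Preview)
Your proposal is correct and is exactly the approach the paper indicates: the paper does not spell out a proof but simply states that the lemma ``follows by directly checking the definition of conditional expectation,'' which is precisely the two-step verification (measurability plus the averaging identity on cylinder generators) that you carry out. Your write-up is in fact more detailed than what the paper provides.
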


We also have:
\begin{lemma}\label{lemkoniec}
In the above setting, assume that $\beta>0$. Then  each game stops almost surely, i.e.:
\begin{equation}\label{koniec}
\mathbb{P}^{x_0}_{\tau,\sigma_I,\sigma_{II}} \big(\{\tau<\infty\}\big) = 1.
\end{equation}
\end{lemma}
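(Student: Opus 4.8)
The plan is to show that regardless of the stopping time $\tau\le\tau_0$ and the strategies $\sigma_I,\sigma_{II}$, the probability that the token is still inside $\Omega$ after $n$ steps decays geometrically in $n$. Since $\{\tau=\infty\}\subset\{\tau_0=\infty\}=\bigcap_n\{x_n\in\Omega\}$, it suffices to bound $\mathbb{P}^{x_0}_{\tau,\sigma_I,\sigma_{II}}(x_n\in\Omega)$. The key geometric observation is that $\Omega$ is bounded, say $\Omega\subset B_R(0)$ for some $R>0$, so there is a fixed direction and a fixed positive probability $q>0$ such that, from \emph{any} point $x\in\Omega$, the uniform (Lebesgue) component of the transition $\gamma_n$ assigns mass at least $q$ to a half-ball of $B_\epsilon(x)$ pointing in the direction of increasing first coordinate $e_1$. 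Concretely, the $e_1$-coordinate of the token, restricted to the noise move, increases by at least $\epsilon/4$ (say) with probability at least $\beta\cdot c_N$, where $c_N>0$ depends only on the dimension. After roughly $\lceil 2R/(\epsilon/4)\rceil$ such favorable noise steps in a row the first coordinate would exceed $R$, forcing the token out of $\Omega$ (indeed out of $B_R$), at which point the game has already stopped.

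More precisely, fix $\epsilon$ and set $m_0=\lceil 8R/\epsilon\rceil+1$. I claim there is $\theta\in(0,1)$ such that for every $x_0\in X$, every admissible $\tau,\sigma_I,\sigma_{II}$, and every $k\ge 0$,
\begin{equation*}
\mathbb{P}^{x_0}_{\tau,\sigma_I,\sigma_{II}}\big(x_{(k+1)m_0}\in\Omega \,\big|\, \mathcal{F}^{x_0}_{km_0}\big)\le \theta
\quad\text{on }\{x_{km_0}\in\Omega\}.
\end{equation*}
To see this, work conditionally on a history of length $km_0$ ending at a point $y\in\Omega$. By Lemma \ref{condi} the one-step transition from any current position $z\in\Omega$ is $\gamma[\cdots]=\frac{\alpha}{2}\delta_{\sigma_I}+\frac{\alpha}{2}\delta_{\sigma_{II}}+\beta\,\mathcal{L}_N\lfloor B_\epsilon(z)/|B_\epsilon(z)|$, and the event ``the $e_1$-coordinate increases by at least $\epsilon/4$'' has probability at least $\beta\,\mu$, where $\mu=|\{w\in B_\epsilon(0): w_1\ge\epsilon/4\}|/|B_\epsilon(0)|>0$ depends only on $N$ (not on $z$, $\epsilon$, or the strategies, since the Dirac parts only help or are irrelevant). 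Applying this $m_0$ times and using the tower property, the probability that all $m_0$ consecutive steps are favorable is at least $(\beta\mu)^{m_0}=:1-\theta>0$; and on that event the first coordinate has increased by at least $m_0\epsilon/4>2R$, so the token has left $B_R\supset\Omega$, hence $\tau_0$ (and thus $\tau$) has already occurred by time $(k+1)m_0$ and $x_{(k+1)m_0}\notin\Omega$. This proves the claim.

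Iterating the claim gives $\mathbb{P}^{x_0}_{\tau,\sigma_I,\sigma_{II}}(x_{km_0}\in\Omega)\le\theta^k$ for all $k$, and since $\{x_n\in\Omega \text{ for all }n\}\subset\bigcap_k\{x_{km_0}\in\Omega\}$ we conclude
\begin{equation*}
\mathbb{P}^{x_0}_{\tau,\sigma_I,\sigma_{II}}\big(\{\tau=\infty\}\big)\le \mathbb{P}^{x_0}_{\tau,\sigma_I,\sigma_{II}}\big(\{\tau_0=\infty\}\big)=\lim_{k\to\infty}\mathbb{P}^{x_0}_{\tau,\sigma_I,\sigma_{II}}\big(x_{km_0}\in\Omega\big)\le\lim_{k\to\infty}\theta^k=0,
\end{equation*}
which is (\ref{koniec}). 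The one delicate point, and the step I expect to need the most care, is handling the conditioning cleanly: one must make the ``favorable step'' lower bound hold \emph{uniformly} over the current position and over the adversarial choices $\sigma_I,\sigma_{II}$ and $\tau$, using Lemma \ref{condi} to reduce each conditional probability to an integral against $\gamma_n$ and noting that on $\{x_n\in\Omega\}$ (equivalently, before $\tau_0$, hence before $\tau$) the transition really is the mixed measure in (\ref{gamma}) rather than $\delta_{x_n}$, so that the $\beta$-noise term is genuinely present; the Dirac terms only add mass and cannot decrease the probability of the target half-ball event after one also accounts for the possibility that $\sigma_I$ or $\sigma_{II}$ themselves land in the favorable region.
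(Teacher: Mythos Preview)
Your overall strategy---force the token out of $\Omega$ by a run of favorable noise steps in a fixed direction---is exactly the idea the paper uses, but your execution contains a genuine gap. The parenthetical ``on $\{x_n\in\Omega\}$ (equivalently, before $\tau_0$, hence before $\tau$)'' has the implication backwards: since $\tau\le\tau_0$, being \emph{before} $\tau$ implies being before $\tau_0$, not the other way around. Concretely, if $\tau$ has already fired at some earlier time while the token is still in $\Omega$, then by the definition of $\gamma_n$ in (\ref{gamma}) the transition is $\delta_{x_n}$ and the token is frozen in $\Omega$ forever. In that case $\mathbb{P}^{x_0}_{\tau,\sigma_I,\sigma_{II}}(x_{(k+1)m_0}\in\Omega\mid\mathcal F_{km_0})=1$ on $\{x_{km_0}\in\Omega\}\cap\{\tau\le km_0\}$, so your displayed conditional bound fails. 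The extreme example $\tau\equiv 0$ with $x_0\in\Omega$ shows that $\mathbb{P}^{x_0}_{\tau,\sigma_I,\sigma_{II}}(\{\tau_0=\infty\})$ can equal $1$, even though $\mathbb{P}(\{\tau=\infty\})=0$; so bounding $\mathbb{P}(\tau_0=\infty)$ is not the right reduction.

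The repair is simple in principle: run the whole argument with $\{\tau>n\}$ in place of $\{x_n\in\Omega\}$. On $\{\tau>km_0+j-1\}$ the transition really is the mixed measure, so the ``favorable step'' lower bound $\beta\mu$ is available; introducing the events $G_j=\{\tau\le km_0+j-1\}\cup E_j$ one gets $\mathbb{P}(G_j\mid\mathcal F_{km_0+j-1})\ge\beta\mu$ unconditionally, hence $\mathbb{P}\big(\bigcap_j G_j\mid\mathcal F_{km_0}\big)\ge(\beta\mu)^{m_0}$, and $\bigcap_j G_j\subset\{\tau\le(k+1)m_0\}$ by your displacement count. This yields $\mathbb{P}(\tau>km_0)\le\theta^k$ directly. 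The paper avoids this bookkeeping altogether by first extending the game to $Y=\mathbb{R}^N$ with transitions that never freeze, invoking Lemma~\ref{transitionlemma} to compare the two measures, and then proving that a run of $M$ consecutive moves in a fixed sector $A_0$ occurs almost surely via a recursive infimum argument. Your route is more elementary once patched; the paper's detour through the extended board is what lets it sidestep the $\tau$-freezing issue you tripped over.
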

For convenience of the reader, we give a self-contained proof of this
observation in the Appendix.

\subsection{$\epsilon$-$p$-superharmonious functions and game values.}

Before proving Theorem \ref{prob}, we need a lemma on almost optimal
selections. This lemma was put forward in \cite{LPS} and now we
present its possible elementary proof.

\begin{lemma}\label{selection}
Let $u:X\to\mathbb{R}$ be a bounded, Borel function. Fix $\delta,
\epsilon>0$. There exist Borel functions $\sigma_{sup},
\sigma_{inf}:\Omega\to X$ such that:
\begin{equation}\label{balls}
\forall x\in\Omega \qquad \sigma_{sup}(x), \sigma_{inf}(x) \in B_\epsilon(x)
\end{equation}
and:
\begin{equation}\label{req2}
\forall x\in\Omega \qquad u(\sigma_{sup}(x)) \geq
\sup_{B_\epsilon(x)} u - \delta, \qquad 
u(\sigma_{inf}(x)) \leq \inf_{B_\epsilon(x)} u + \delta.
\end{equation}
\end{lemma}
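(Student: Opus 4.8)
The plan is to prove Lemma \ref{selection} by a standard measurable-selection argument adapted to the supremum and infimum over balls, following the approach of \cite{LPS}. I will treat only the claim for $\sigma_{sup}$, since the case of $\sigma_{inf}$ is entirely analogous (replace $u$ by $-u$). First, I would observe that the multifunction $x\mapsto \overline{B_\epsilon(x)}\cap X$ has a natural countable structure: fix a countable dense set $\{q_k\}_{k\ge 1}$ in $X$, and for each $x\in\Omega$ note that $\sup_{B_\epsilon(x)} u = \sup\{u(q_k): q_k\in B_\epsilon(x)\}$, since $u$ need not be continuous but the supremum over an open ball is realized along any countable dense subset only if $u$ is, say, Borel — here one should be slightly careful: in general $\sup_{B_\epsilon(x)}u$ may exceed $\sup_k\{u(q_k): q_k\in B_\epsilon(x)\}$ for discontinuous $u$. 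To get around this I would instead argue directly with the graph of the multifunction and the Borel structure, as in the second paragraph.

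The cleaner route, which I would actually carry out, is the following. For $x\in\Omega$ write $M(x)=\sup_{B_\epsilon(x)} u$, and first check that $M:\Omega\to\mathbb{R}$ is Borel measurable; this follows because $\{x: M(x)>t\} = \{x: \exists\, y\in B_\epsilon(x)\cap X \text{ with } u(y)>t\}$ is the projection onto the first factor of the set $\{(x,y)\in\Omega\times X: |x-y|<\epsilon,\ u(y)>t\}$, which is Borel, and — crucially — the projection is along the open set $\{|x-y|<\epsilon\}$, so one can write $\{x: M(x)>t\}$ as a countable union $\bigcup_k \{x: \exists\, y\in X,\ |x-y|<\epsilon-1/k,\ u(y)>t\}$, and each of these is open in the $x$-variable after projecting a Borel set along a proper/closed-fibered map — alternatively invoke the Jankov–von Neumann or Kuratowski–Ryll-Nardzewski selection theorem directly. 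Then, define the set-valued map $x\mapsto \Phi(x) := \{y\in \overline{B_\epsilon(x)}\cap X:\ u(y)\ge M(x)-\delta\}$; this is nonempty for every $x\in\Omega$ by the definition of $M(x)$ as a supremum, and its graph $\{(x,y): |x-y|\le\epsilon,\ u(y)\ge M(x)-\delta\}$ is a Borel subset of $\Omega\times X$ because $u$ and $M$ are Borel. By the Jankov–von Neumann uniformization theorem (or Kuratowski–Ryll-Nardzewski, after verifying lower measurability of $\Phi$), there exists a universally measurable — and after modification on a null set, Borel — selection $\sigma_{sup}:\Omega\to X$ with $\sigma_{sup}(x)\in\Phi(x)$, which is exactly (\ref{balls}) together with the first inequality in (\ref{req2}).

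For readers who prefer to avoid descriptive set theory, I would also present the elementary construction promised in the statement: partition $\mathbb{R}$ into the countably many dyadic intervals $I_{j} = [j\delta/2, (j+1)\delta/2)$, $j\in\mathbb{Z}$, and for each $j$ let $E_j = \{x\in\Omega:\ M(x)\in I_j\}$, a Borel set by measurability of $M$. On each $E_j$ it suffices to choose, measurably in $x$, some point $y\in B_\epsilon(x)\cap X$ with $u(y)\ge (j)\delta/2 - \delta/2 \ge M(x)-\delta$; such a point exists for every $x\in E_j$. Now enumerate a countable dense set $\{q_k\}$ in $X$ and for $x\in E_j$ set $\sigma_{sup}(x) = q_{k(x)}$ where $k(x)$ is the least index $k$ with $q_k\in B_\epsilon(x)$ and $u(q_k) > (j+1)\delta/2 - \delta \ge M(x)-\delta$ — here I use that $M(x) < (j+1)\delta/2$, so the strict-sup definition guarantees the existence of such $q_k$ in the \emph{open} ball. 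The map $x\mapsto k(x)$ is Borel on $E_j$ (each level set $\{k(x)=k\}$ is an intersection of the open condition $q_k\in B_\epsilon(x)$, the condition $u(q_k)>(j+1)\delta/2-\delta$ which is either all of $E_j$ or empty, and finitely many complements), hence $\sigma_{sup}$ is Borel on $E_j$, and gluing over $j\in\mathbb{Z}$ yields a Borel function on $\Omega$ satisfying (\ref{balls}) and (\ref{req2}).

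The main obstacle, and the only genuinely delicate point, is the interplay between ``$\sup$ over an \emph{open} ball'' and the discontinuity of $u$: one must make sure that the $\delta$-almost-maximizer can be taken at a point of a fixed countable dense set, which works precisely because $M(x)$ is a supremum over an open ball and we only demand the inequality up to $\delta$ — so a strict gap of size $\delta/2$ to the next dyadic level is available. Establishing the Borel measurability of $M(x)=\sup_{B_\epsilon(x)}u$ cleanly (rather than just its upper semicontinuity, which need not hold) is the one computation I would actually write out carefully; everything else is bookkeeping.
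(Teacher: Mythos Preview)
Your proposal has a genuine gap in both routes.

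\textbf{The elementary route fails.} You correctly note in your first paragraph that for a discontinuous Borel $u$ the supremum $\sup_{B_\epsilon(x)}u$ can strictly exceed $\sup\{u(q_k):q_k\in B_\epsilon(x)\}$ for any fixed countable dense set $\{q_k\}$. You then commit exactly this error in your ``elementary construction'': for $x\in E_j$ you assert the existence of some $q_k\in B_\epsilon(x)$ with $u(q_k)>(j-1)\delta/2$, saying that ``the strict-sup definition guarantees'' it. It does not. The definition of $M(x)$ as a supremum yields a point $y\in B_\epsilon(x)$ with $u(y)$ close to $M(x)$, but nothing forces $y$ to lie in $\{q_k\}$. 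A concrete counterexample: $u=\chi_{\{y_0\}}$ with $y_0\notin\{q_k\}$; then $M(x)=1$ whenever $y_0\in B_\epsilon(x)$, yet $u(q_k)=0$ for every $k$, so no admissible $k(x)$ exists.

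\textbf{The descriptive-set-theory route does not produce a Borel selection.} Jankov--von~Neumann uniformization gives a selection measurable with respect to the $\sigma$-algebra generated by analytic sets (hence universally measurable), not Borel; your phrase ``after modification on a null set, Borel'' has no meaning here because there is no preferred measure. Kuratowski--Ryll-Nardzewski requires closed-valued multifunctions, and $\{y:u(y)\ge M(x)-\delta\}$ is merely Borel. You also define $\Phi(x)$ with the closed ball $\overline{B_\epsilon(x)}$, whereas the lemma demands $\sigma_{sup}(x)\in B_\epsilon(x)$; Remark~\ref{remi} shows that with closed balls a Borel selector can in fact fail to exist, so this is not a harmless change. (Incidentally, the measurability of $M$ is simpler than you suggest: $\{M>t\}=\{u>t\}+B_\epsilon(0)$ is open, so $M$ is lower semicontinuous.)

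\textbf{What the paper does instead.} The paper bypasses selection theorems entirely. It first treats $u=\chi_A$: the set $A+B_\epsilon(0)=\bigcup_{a\in A}B_\epsilon(a)$ is open and hence can be written as a countable union of $\epsilon$-balls with centers $x_i\in A$ (take a countable dense subset of $A$); setting $\sigma_{sup}(x)$ equal to the first such center whose ball contains $x$ gives an explicit Borel map with $\sigma_{sup}(x)\in A\cap B_\epsilon(x)$. Simple functions are handled layer by layer in the same way, and a general bounded Borel $u$ is uniformly approximated by a simple function $v$ to within $\delta/3$, so that a $\delta/3$-selector for $v$ is automatically a $\delta$-selector for $u$. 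The uniform approximation by simple functions is what replaces your failed countable-dense-set idea: it reduces the problem to finitely many Borel level sets rather than trying to capture the values of $u$ on a countable set of points.
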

\begin{proof}
We will prove existence of $\sigma_{sup}$, while existence of
$\sigma_{inf}$ follows in a similar manner. 

{\bf 1.} Let $u=\chi_{A}$ for some Borel set $A\subset X$. 
Without loss of generality $\delta<\frac{1}{3}$. 
We write $A+B_\delta(0) = \bigcup_{i=1}^\infty B_\delta (x_i)$ as the union of
countably many open balls, and define:
$$\forall x\in\Omega \qquad \sigma_{sup}(x) = \left\{ \begin{array}{ll}
x & \mbox{ if } x\not\in A+B_\delta(0)\\
x_i & \mbox{ if } x\in B_\delta(x_i) \setminus \bigcup_{j=1}^{i-1}B_\delta(x_j)
\end{array}\right.$$
Clearly, $\sigma_{sup}$ above is Borel as a pointwise limit of Borel
functions. 

{\bf 2.} Let $u=\sum_{k=1}^n\alpha_k\chi_{A_k}$ be a simple function,
given by disjoint Borel sets $A_k\subset X$ and $\alpha_1 <
\alpha_2<\ldots < \alpha_n$. Without loss of generality $\delta<\min_{k=1\ldots
  n-1}\frac{\alpha_{k+1} - \alpha_k}{3}$. We now write, as before:
$A_k+B_\delta(0) = \bigcup_{i=1}^\infty B_\delta (x_i^k)$, and we
subsequently define:
$$\forall x\in\Omega \qquad \sigma_{sup}(x) = \left\{ \begin{array}{ll}
x & \mbox{ if } x\not\in (\bigcup_{k=1}^n A_k)+B_\delta(0)\\
x_i^k & \mbox{ if } x\in B_\delta(x_i^k) \setminus
\left(\bigcup_{j=1}^{i-1}B_\delta(x_j^k) \cup \bigcup_{j>k} (A_j + B_\delta(0))\right)
\end{array}\right.$$

{\bf 3.} In  the  general case when $u$ is an arbitrary bounded Borel function,
consider a simple function   $v$ such that $\|u-v\|_{L^\infty(X)}\leq
\frac{\delta}{3}$. By the previous construction, there exists
$\sigma_{sup}:\Omega\to X$ which is a
sup-selection for $v$, with the error $\frac{\delta}{3}$. Then we
have: 
$$\forall x\in\Omega \qquad u(\sigma_{sup}(x)) \geq v(\sigma_{sup}(x))
- \frac{\delta}{3} \geq \sup_{B_\epsilon(x)} v -\frac{2\delta}{3} 
\geq \sup_{B_\epsilon(x)} u - \delta,$$
and therefore $\sigma_{sup}$ is 
also the required sup-selection for the function $u$.
\end{proof}

\begin{remark}\label{remi}
If we replace the open balls
$B_\epsilon(x)$ in the requirement (\ref{balls}) by the closed ones,
then the Borel selection satisfying (\ref{req2}) may not exist.
Take $\epsilon=1, \delta=\frac{1}{3}$ and let $u=\chi_A$ where
$A\subset \mathbb{R}^3$ is a bounded Borel set with the property that
$A+\bar B_1(0)$ is not a Borel set. The existence of $A$ is nontrivial
(see \cite{LPS}) and relies on the existence of a $2$d Borel set whose
projection on the $x_1$ axis is not Borel. This result extends the
famous construction of Erdos and Stone \cite{ES} of a compact (Cantor) set $A$
and a $G_\delta$ set $B$ such that $A+B$ is not Borel.
\end{remark}

\medskip

\noindent {\bf Proof of Theorem \ref{prob}.}

{\bf 1.}  We first show that:
\begin{equation}\label{uII<u}
u_{II} \leq u \qquad \mbox{in } \Omega.
\end{equation}
Fix $\eta>0$ and fix any strategy $\sigma_{I}$ and a stopping time $\tau\leq \tau_0$.
By Lemma \ref{selection}, there exists a (Markovian) strategy $\sigma_{0, II}$ for
Player II, such that $\sigma_{0, II}^{n}(x_0, \ldots x_n) =
\sigma_{0, II}^{n}(x_n)$ and that:
\begin{equation}\label{almost}
\begin{split}
\forall n\geq 1\quad \forall x_n \in X \qquad
u(\sigma_{0, II}^{n}(x_n)) \leq \inf_{B_\epsilon(x_n)} u
+ \frac{\eta}{2^{n+1}}
\end{split}
\end{equation}
Using Lemma \ref{condi}, definition (\ref{gamma}), 
suboptimality in (\ref{almost}) and the equation (\ref{dpp}), we compute:
\begin{equation*}
\begin{split}
 \forall (x_0, \ldots, x_{n-1})&\not\in A^\tau_{n-1} \qquad
\mathbb{E}^{x_0}_{\tau,\sigma_{I},
  \sigma_{0, II}}\{u\circ x_n + \frac{\eta}{2^n}\mid \mathcal{F}^{x_0}_{n-1}\} (x_0, \ldots,x_{n-1}) 
\\ & = \int_X u ~\mbox{d}\gamma_{n-1}[x_0,\ldots, x_{n-1}] + \frac{\eta}{2^n}\\
& = \frac{\alpha}{2} u(\sigma_I^{n-1}(x_0, \ldots, x_{n-1})) +
\frac{\alpha}{2} u(\sigma_{0, II}^{n-1}(x_0, \ldots, x_{n-1}))  
+ \beta\fint_{B_\epsilon(x_{n-1})} u + \frac{\eta}{2^n} \\
& \leq \frac{\alpha}{2} \sup_{B_\epsilon(x_{n-1})} u + \frac{\alpha}{2} 
\inf_{B_\epsilon(x_{n-1})} u + 
\beta\fint_{B_\epsilon(x_{n-1})} u +
\frac{\eta}{2^n}(1+\frac{\alpha}{2})\\
& \leq u(x_{n-1}) + \frac{\eta}{2^{n-1}} = \Big(u\circ x_{n-1} +
\frac{\eta}{2^{n-1}}\Big) (x_0, \ldots,x_{n-1}).
\end{split}
\end{equation*}
On the other hand, when $(x_0, \ldots, x_{n-1})\in A^\tau_{n-1}$, then $\mathbb{E}^{x_0}_{\tau,\sigma_{I},
  \sigma_{0, II}}\{u\circ x_n + \frac{\eta}{2^n}\mid \mathcal{F}^{x_0}_{n-1}\} (x_0, \ldots,x_{n-1}) 
=  u(x_{n-1}) + \frac{\eta}{2^{n}}$ directly from Lemma
\ref{condi} and by (\ref{gamma}).
We therefore obtain that the sequence of random variables $\{ u\circ
x_n + \frac{\eta}{2^n}\}_{n\geq 0}$ is a supermartingale with respect
to the filtration $\{\mathcal{F}^{x_0}_{n}\}$. It follows that:
\begin{equation*}
\begin{split}
 u_{II}(x_0) &\leq \sup_{\tau, \sigma_{I}} \mathbb{E}^{x_0}_{\tau,\sigma_{I},
  \sigma_{0, II}}[G\circ x_\tau + \frac{\eta}{2^{\tau}}] \leq 
\sup_{\tau, \sigma_{I}} \mathbb{E}^{x_0}_{\tau,\sigma_{I},
  \sigma_{0, II}}[u\circ x_\tau + \frac{\eta}{2^{\tau}}] \\ & \leq
\sup_{\tau, \sigma_{I}} \mathbb{E}^{x_0}_{\tau,\sigma_{I},
  \sigma_{0, II}}[u\circ x_0 + \frac{\eta}{2^{0}}] = u(x_0) +\eta, 
\end{split}
\end{equation*}
where we used the definition of $u_{II}$, the fact that $G\leq u$, 
and the Doob's optional stopping theorem in view of the
supermartingale property and the uniform boundedness of the random
variables $\{u\circ x_{\tau\wedge n} + \frac{\eta}{2^{\tau\wedge
    n}}\}_{n\geq 0}$.
Since $\eta>0$ was arbitrary, (\ref{uII<u}) follows.

\smallskip

{\bf 2.}  We now prove that:
\begin{equation}\label{u<uI}
u\leq u_{I}  \qquad \mbox{in } \Omega.
\end{equation}
Together with (\ref{uII<u}) and in view of the direct observation from
(\ref{value}) that $u_{I}\leq u_{II}$, (\ref{u<uI}) will imply Theorem \ref{prob}.

Fix $\eta>0$ and fix any strategy $\sigma_{II}$.
By Lemma \ref{selection}, there exists a strategy $\sigma_{0, I}$ for
Player I, such that $\sigma_{0, I}^{n}(x_0, \ldots x_n) =
\sigma_{0, I}^{n}(x_n)$ and that:
\begin{equation}\label{almost2}
\forall n\geq 1\quad \forall x_n \in X \qquad
u(\sigma_{0, I}^{n}(x_n)) \geq \sup_{B_\epsilon(x_n)} u
- \frac{\eta}{2^{n+1}}.
\end{equation}
Define the stopping time $\bar\tau(\omega) = \inf\{n\geq 0; ~u(x_{n}) = \Psi(x_n)\},$
with the convention that $\inf$ over an empty set is $+\infty$.
Clearly:
\begin{equation}\label{raz} 
(x_0, \ldots x_{n})\not\in A_{n}^{\bar\tau}
\qquad \mbox{iff} \qquad \forall k=0\ldots n \quad u(x_k) > \Psi(x_k).
\end{equation}
As before, by Lemma \ref{condi}, the definition (\ref{gamma}), and the
suboptimality in (\ref{almost2}), we obtain:
\begin{equation*}
\begin{split}
 \forall (x_0, \ldots, x_{n-1})&\not\in A^{\bar\tau}_{n-1} \qquad
\mathbb{E}^{x_0}_{\tau,\bar\sigma_{0, I},
  \sigma_{II}}\{u\circ x_n - \frac{\eta}{2^n}\mid \mathcal{F}^{x_0}_{n-1}\} (x_0, \ldots,x_{n-1}) 
\\ & = \int_X u ~\mbox{d}\gamma_{n-1}[x_0,\ldots, x_{n-1}] - \frac{\eta}{2^n}\\
& = \frac{\alpha}{2} u(\sigma_{0, I}^{n-1}(x_0, \ldots, x_{n-1})) +
\frac{\alpha}{2} u(\sigma_{II}^{n-1}(x_0, \ldots, x_{n-1}))  
+ \beta\fint_{B_\epsilon(x_{n-1})} u - \frac{\eta}{2^n} \\
& \geq \frac{\alpha}{2} \sup_{B_\epsilon(x_{n-1})} u + \frac{\alpha}{2} 
\inf_{B_\epsilon(x_{n-1})} u + \beta\fint_{B_\epsilon(x_{n-1})} u -
\frac{\eta}{2^n}(1+\frac{\alpha}{2})\\
& = u(x_{n-1}) - \frac{\eta}{2^{n}}(1+\frac{\alpha}{2}) \geq \Big(u\circ x_{n-1} -
\frac{\eta}{2^{n-1}}\Big) (x_0, \ldots,x_{n-1}),
\end{split}
\end{equation*}
where the last equality above follows from (\ref{dpp}) because of (\ref{raz}).
For $(x_0, \ldots, x_{n-1})\in A^{\bar\tau}_{n-1}$, we also get, as
before: $\mathbb{E}^{x_0}_{\tau,\sigma_{0, I},
\sigma_{II}}\{u\circ x_n - \frac{\eta}{2^n}\mid \mathcal{F}^{x_0}_{n-1}\} (x_0, \ldots,x_{n-1}) 
=  u(x_{n-1}) - \frac{\eta}{2^{n}}$. We now conclude that $\{ u\circ
x_n - \frac{\eta}{2^n}\}_{n\geq 0}$ is a submartingale with respect
to the filtration $\{\mathcal{F}^{x_0}_{n}\}$, and therefore:
\begin{equation*}
\begin{split}
 u_{I}(x_0) &\geq \inf_{\sigma_{II}}
 \mathbb{E}^{x_0}_{\bar\tau,\sigma_{0, I},
  \sigma_{II}}[G\circ x_{\bar\tau} - \frac{\eta}{2^{\bar\tau}}] =
\inf_{\sigma_{II}} \mathbb{E}^{x_0}_{\bar\tau,\sigma_{0, I},
  \sigma_{II}}[u\circ x_{\bar\tau} - \frac{\eta}{2^{\bar\tau}}] \\ & \geq
\inf_{\sigma_{II}} \mathbb{E}^{x_0}_{\bar\tau,\sigma_{0, I},
  \sigma_{II}}[u\circ x_0 - \frac{\eta}{2^{0}}] = u(x_0) -\eta, 
\end{split}
\end{equation*}
where we used the definition of $u_{I}$, the fact that
$G(x_{\bar\tau}) = u(x_{\bar\tau})$ derived from the definition of $\bar\tau$, 
and the Doob's optional stopping theorem used to the two stopping
times: $\bar\tau$ and $0$. Since $\eta>0$ was arbitrary, we conclude (\ref{u<uI}).
\endproof

\section{The main convergence theorem: a proof of Theorem \ref{introthm}}\label{bo1}

First, we recall the definition of viscosity solutions.

\begin{definition}\label{viscosity}
We say that a continuous function $u:\bar\Omega\to\mathbb{R}$ is a
viscosity solution of the obstacle problem (\ref{obstacle}) if and
only if: $u=F$ on $\partial\Omega$ together with $u\geq \Psi$ in
$\Omega$, and:
\begin{itemize}
\item[(i)] for every $x_0\in\Omega$ and every
  $\phi\in\mathcal{C}^2(\Omega)$ such that:
\begin{equation}\label{ma1}
\phi(x_0) = u(x_0) , \quad \phi<u ~~\mbox{ in } ~\bar\Omega\setminus \{x_0\},
\quad \nabla \phi(x_0)\neq 0,
\end{equation}
there holds: $\Delta_p\phi(x_0)\leq 0$.
\item[(ii)] for every $x_0\in\Omega$ such that $u(x_0)>\Psi(x_0)$ and every
  $\phi\in\mathcal{C}^2(\Omega)$ such that:
\begin{equation}\label{ma2}
\phi(x_0) = u(x_0) , \quad \phi>u ~~\mbox{ in } ~\bar\Omega\setminus \{x_0\},
\quad \nabla \phi(x_0)\neq 0,
\end{equation}
there holds: $\Delta_p\phi(x_0)\geq 0$.
\end{itemize}
\end{definition}

The fact that variational solutions are viscosity solutions in the
sense of Definition \ref{viscosity} is due to the equivalence of the
local notions of viscosity and weak solutions \cite{JLM}  and the
continuity up to the boundary of variational solutions under the
regularity hypothesis on $F$, $\Psi$, and $\Omega$ (see for example
\cite{BB}.)  The fact that viscosity solutions are variational
solutions is actually equivalent to the following folklore uniqueness result that, for the sake of
completeness, we prove in the Appendix:

\begin{lemma}\label{uni}
Let $u$ and $\bar u$ be two viscosity solutions to (\ref{obstacle}) as
in Definition \ref{viscosity}. Then $u=\bar u$. 
\end{lemma}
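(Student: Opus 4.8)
The plan is to prove Lemma \ref{uni} by a comparison argument based on the standard doubling-of-variables technique for viscosity solutions, adapted to the obstacle problem. Suppose $u$ and $\bar u$ are two viscosity solutions in the sense of Definition \ref{viscosity}; by symmetry it suffices to show $u \leq \bar u$ in $\bar\Omega$. Since both agree with $F$ on $\partial\Omega$ and are continuous on $\bar\Omega$, the function $u - \bar u$ attains its maximum over $\bar\Omega$; if this maximum is $\leq 0$ we are done, so assume for contradiction that $\sup_{\bar\Omega}(u-\bar u) = m > 0$, attained at some interior point. The key structural observation is that at a point where $u$ exceeds $\bar u$, we have $u(x) > \bar u(x) \geq \Psi(x)$, so $u$ is a viscosity subsolution of $-\Delta_p u \leq 0$ there (part (i) of the definition), while $\bar u$ is a supersolution of $-\Delta_p \bar u \geq 0$ everywhere (part (i) again, applied to $\bar u$ as a supersolution — here one must be careful, since part (i) as stated gives the subsolution inequality; one instead uses that $\bar u$ being a viscosity solution means it satisfies the supersolution property of the $p$-Laplacian in the complement of its own contact set, and on the contact set $\bar u = \Psi \leq u$ cannot be a location of a positive maximum of $u - \bar u$ unless... — this needs the argument below).

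First I would set up the doubling: for $j \in \mathbb{N}$ large, consider $\Phi_j(x,y) = u(x) - \bar u(y) - \frac{j}{2}|x-y|^2$ on $\bar\Omega \times \bar\Omega$, and let $(x_j, y_j)$ be a maximizer. Standard lemmas (e.g. Lemma 3.1 in the User's Guide to viscosity solutions) give $j|x_j - y_j|^2 \to 0$, $x_j, y_j \to \hat x$ for some subsequence, and $\Phi_j(x_j,y_j) \to m = (u - \bar u)(\hat x) > 0$, with $\hat x \in \Omega$ an interior point. Since $m > 0$, for large $j$ we have $u(x_j) > \bar u(y_j)$, and also $u(x_j) \geq \Psi(x_j) + (\text{something} \to m)$ and similarly $\bar u(y_j) > \Psi(y_j)$ is not automatic — instead, because $(u-\bar u)(\hat x) = m > 0 = (\Psi - \Psi)(\hat x) \geq$ anything, and $\bar u(\hat x) \geq \Psi(\hat x)$ while $u(\hat x) = \bar u(\hat x) + m$, one gets $u(\hat x) > \Psi(\hat x)$, hence $u(x_j) > \Psi(x_j)$ for large $j$ by continuity — wait, we need $\bar u(y_j) > \Psi(y_j)$ to invoke part (ii) for $\bar u$. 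But $\bar u(\hat x) \geq \Psi(\hat x)$ might be equality. The resolution: we actually only need part (i) for $u$ (giving $u$ is a $p$-subsolution near $\hat x$ since $u(\hat x) > \Psi(\hat x)$) and the potential-theoretic/viscosity supersolution property of $\bar u$ on all of $\Omega$ — and indeed $\bar u$ being a viscosity solution of the obstacle problem is in particular always a viscosity supersolution of $-\Delta_p \bar u \geq 0$ in all of $\Omega$ (this is the first line of (\ref{obstacle}), and is built into any reasonable notion of solution; one should state this explicitly as a consequence of Definition \ref{viscosity} or note it holds because $\bar u \geq \Psi$ forces the contact-set structure). So $u$ is a subsolution near $\hat x$, $\bar u$ is a supersolution near $\hat x$, and we compare.

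Next I would apply the theorem on sums (Crandall–Ishii lemma) at $(x_j, y_j)$ to produce symmetric matrices $X_j, Y_j$ with $(j(x_j - y_j), X_j)$ in the closure of the second-order superjet of $u$ at $x_j$, $(j(x_j-y_j), Y_j)$ in the closure of the second-order subjet of $\bar u$ at $y_j$, and $X_j \leq Y_j$ (plus the usual bound $\begin{pmatrix} X_j & 0 \\ 0 & -Y_j\end{pmatrix} \leq 3j \begin{pmatrix} I & -I \\ -I & I\end{pmatrix}$). The common gradient $p_j := j(x_j - y_j)$ is the critical quantity: one must rule out $p_j = 0$ along a subsequence, or handle it. If $p_j \neq 0$ for large $j$, the subsolution inequality for $u$ gives $-|p_j|^{p-2}\big(\mathrm{trace}(X_j) + (p-2)\langle X_j \hat p_j, \hat p_j\rangle\big) \leq 0$ and the supersolution inequality for $\bar u$ gives $-|p_j|^{p-2}\big(\mathrm{trace}(Y_j) + (p-2)\langle Y_j \hat p_j, \hat p_j\rangle\big) \geq 0$ with $\hat p_j = p_j/|p_j|$; subtracting and using $X_j \leq Y_j$ (which controls both $\mathrm{trace}(X_j - Y_j) \leq 0$ and $\langle (X_j - Y_j)\hat p_j, \hat p_j\rangle \leq 0$, valid since $p - 2 \geq 0$) yields a contradiction with the strict positivity — more precisely one gets $0 \leq |p_j|^{p-2}(\text{nonpositive}) \leq 0$, which is not yet a contradiction; the contradiction comes from the strict inequalities $\phi < u$, $\phi > \bar u$ away from the touching point built into Definition \ref{viscosity}, which one converts into the doubling setting by the standard perturbation trick (adding $\sigma|x - \hat x|^2$ or $\sigma|x - x_j|^4$ type terms to make maxima strict), forcing a strictly negative term that cannot be balanced. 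The main obstacle, and the only genuinely delicate point, is the degeneracy of $\Delta_p$ where the gradient vanishes: if $p_j \to 0$ one cannot directly use the equation. The standard fix — which I would carry out — is that when $p - 2 > 0$ the $p$-Laplacian is degenerate but the comparison still goes through because the test functions in Definition \ref{viscosity} are required to have non-vanishing gradient, and one handles the possibility $p_j = 0$ by noting that then one can slightly tilt, or more cleanly, by the observation that $j|x_j - y_j|^2 \to 0$ together with the strict-maximum normalization forces the relevant inequalities; alternatively one invokes the known equivalence \cite{JLM} of viscosity and weak solutions to transfer to the variational comparison principle (Lindqvist \cite{L}), which is arguably the cleanest route and is exactly what the surrounding text ("the equivalence of the local notions of viscosity and weak solutions \cite{JLM}") is pointing at. I would present the doubling-variables argument as the main line and remark that the gradient-degeneracy issue is resolved exactly as in \cite{JLM, L}, since a fully self-contained treatment would essentially reproduce those references.
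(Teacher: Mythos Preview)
Your proposal takes a genuinely different route from the paper. The paper does \emph{not} run a doubling-of-variables viscosity comparison; instead it shows that any viscosity solution in the sense of Definition~\ref{viscosity} is a \emph{variational} solution of the obstacle problem $\mathcal{K}_{\Psi,f}(\mathcal{U})$ on every $\mathcal{U}\subset\subset\Omega$, by invoking \cite{JLM} (viscosity $\Leftrightarrow$ $p$-superharmonic/$p$-subharmonic) and \cite{L} ($p$-superharmonic $\Rightarrow W^{1,p}_{\rm loc}$), and then splitting an arbitrary test function $\phi$ into $\phi^+ + \phi^-$ to combine the super- and sub-solution variational inequalities. Uniqueness then follows from the variational comparison principle (strict convexity of $\int|\nabla\cdot|^p$) applied on a sequence of compactly contained Lipschitz subdomains exhausting $\Omega$, together with continuity up to $\partial\Omega$. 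This is precisely the ``alternative'' you mention in your last sentence; the paper adopts it as the main (and only) line.

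Your primary line --- direct viscosity comparison --- has a real gap at the decisive step. After the theorem on sums you correctly observe that subtracting the sub- and supersolution inequalities gives only $0\le 0$; your proposed remedies (adding $\sigma|x-\hat x|^2$ or $\sigma|x-x_j|^4$ to force strictness) do not work straightforwardly for the degenerate $p$-Laplacian, because such perturbations alter the gradient of the test function and hence the coefficient $|p_j|^{p-2}$ and the direction $\hat p_j$ in a way that does not automatically preserve the sign. The case $p_j=j(x_j-y_j)=0$ is not a technicality here: Definition~\ref{viscosity} imposes $\nabla\phi(x_0)\neq 0$ on test functions, so when $x_j=y_j$ the doubling test function is simply inadmissible, and a full treatment of this point is essentially the content of \cite{JLM}. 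Invoking \cite{JLM} at that moment collapses your argument into the paper's.

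Two smaller points. First, you have the labels of (i) and (ii) reversed: in Definition~\ref{viscosity}, condition~(i) (test function from below, $\Delta_p\phi\le 0$) is the \emph{supersolution} property, valid everywhere, while condition~(ii) (test function from above on $\{u>\Psi\}$, $\Delta_p\phi\ge 0$) is the \emph{subsolution} property. Your structural observation is correct in content --- at an interior maximum of $u-\bar u$ one has $u>\Psi$, so $u$ is a subsolution there and $\bar u$ is a supersolution everywhere --- but the references to ``part~(i)'' should read ``part~(ii)'' and vice versa. Second, the supersolution property of $\bar u$ on all of $\Omega$ is exactly condition~(i); there is no need for the hedging in your parenthetical.
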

It is also classical that the unique solution to (\ref{obstacle}) is  the pointwise infimum
of all $p$-superharmonic functions that are above the obstacle (see
Chapters 5 and 7 in \cite{HKM}). 

\medskip

Our main approximation result is given in Theorem \ref{approx} below.

\begin{theorem}\label{approx}
Let $p\in [2,\infty)$. Let $F:\partial\Omega\rightarrow \mathbb{R}$,
$\Psi:\bar\Omega\to\mathbb{R}$  be two Lipschitz continuous
functions, satisfying:
\begin{equation}\label{dwa2}
\Psi\leq F \qquad \mbox{on } \partial\Omega.
\end{equation}
Without loss of generality, we may assume that $F, \Psi$ above are defined on
$\bar\Gamma$ and $X$, respectively, and
that (\ref{dwa2}) still holds on $\Gamma$.
Let $u_\epsilon:\Omega\cup\Gamma\to\mathbb{R}$ be the unique
$\epsilon$-$p$-superharmonious function solving (\ref{dpp}) with $\alpha =
\frac{p-2}{p+N}$ and $\beta = \frac{2+N}{p+N}$.

Then $u_\epsilon$ converge as $\epsilon \to 0$,
uniformly in $\bar\Omega$, to a continuous function $u$ which is the
unique viscosity solution to the obstacle problem (\ref{obstacle}).
\end{theorem}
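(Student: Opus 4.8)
The plan is to follow the standard compactness-plus-stability scheme for viscosity approximation, with the probabilistic oscillation estimate as the engine. First I would establish the uniform oscillation bound \eqref{key1}: given $\eta>0$ there exist $r_0,\epsilon_0>0$ such that for $\epsilon<\epsilon_0$ and $x_0,y_0\in\bar\Omega$ with $|x_0-y_0|<r_0$ one has $|u_\epsilon(x_0)-u_\epsilon(y_0)|<\eta$. The idea is to use the game-value representation from Theorem \ref{prob}, writing $u_\epsilon(x_0)=u_I(x_0)$ and $u_\epsilon(y_0)=u_{II}(y_0)$ (or vice versa). One couples the two games started at $x_0$ and $y_0$ by making Player~II in the first game mimic Player~I's moves from the second game (and handling the random/noise step by the same Brownian-type increment, with a small correction to keep the tokens inside the respective balls), so that the difference process $|x_n-y_n|$ stays controlled; when either token is close to $\partial\Omega$ one uses the Lipschitz continuity of $F$, when a token hits the contact set one uses the Lipschitz continuity of $\Psi$, and Lemma \ref{lemkoniec} guarantees the games terminate a.s. so the stopping arguments are legitimate. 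This is the step I expect to be the main obstacle: the coupling must simultaneously control the boundary exit, the optimal-stopping time $\bar\tau$ on the contact set, and the accumulated drift from the $\alpha$-part of the dynamics, and one must quantify how $r_0,\epsilon_0$ depend on $\eta$, the Lipschitz constants, and $\mathrm{diam}\,\Omega$.

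Second, once \eqref{key1} holds together with the uniform bound $\inf_X\Psi\le u_\epsilon\le\max\{\sup_\Gamma F,\sup_X\Psi\}$ (immediate from \eqref{dpp} and the construction in Section \ref{rm}), I would invoke the Arzel\`a--Ascoli-type compactness lemma from \cite{MPR}: although the $u_\epsilon$ need not be continuous, \eqref{key1} is exactly ``asymptotic equicontinuity up to scale $\epsilon$,'' which yields a subsequence $u_{\epsilon_j}$ converging uniformly on $\bar\Omega$ to a continuous limit $u$. Near $\partial\Omega$ the convergence together with $u_\epsilon=F$ on $\Gamma$ and the oscillation estimate forces $u=F$ on $\partial\Omega$; and since each $u_\epsilon\ge\Psi$ in $\Omega$ (again from \eqref{dpp}), the limit satisfies $u\ge\Psi$ in $\Omega$.

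Third, I would show that any such uniform limit $u$ is a viscosity solution of \eqref{obstacle} in the sense of Definition \ref{viscosity}, by the usual viscosity-stability argument applied to the dynamic programming principle \eqref{dpp}. For part (i): if $\phi\in\mathcal{C}^2(\Omega)$ touches $u$ from below at $x_0$ with $\nabla\phi(x_0)\ne0$, then by uniform convergence there are points $x_\epsilon\to x_0$ where $u_\epsilon-\phi$ has a near-minimum; plugging $\phi$ into \eqref{dpp} at $x_\epsilon$, using $u_\epsilon(x_\epsilon)\ge\frac{\alpha}{2}\sup_{B_\epsilon}u_\epsilon+\frac{\alpha}{2}\inf_{B_\epsilon}u_\epsilon+\beta\fint_{B_\epsilon}u_\epsilon$ (the $\max$ in \eqref{dpp} is $\ge$ the mean-value part), and the Taylor-expansion asymptotics
\[
\frac{\alpha}{2}\sup_{B_\epsilon(x)}\phi+\frac{\alpha}{2}\inf_{B_\epsilon(x)}\phi+\beta\fint_{B_\epsilon(x)}\phi-\phi(x)=\frac{\beta\epsilon^2}{2(N+2)}\Big(\Delta\phi(x)+(p-2)\Delta_\infty\phi(x)\Big)+o(\epsilon^2)
\]
(valid where $\nabla\phi(x)\ne0$, with the constants $\alpha,\beta$ as chosen), dividing by $\epsilon^2$ and letting $\epsilon\to0$ gives $\Delta_p\phi(x_0)\le0$ via \eqref{plapla}. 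For part (ii): if $u(x_0)>\Psi(x_0)$ and $\phi$ touches $u$ from above at $x_0$ with $\nabla\phi(x_0)\ne0$, then at the near-maximum points $x_\epsilon$ one has $u_\epsilon(x_\epsilon)>\Psi(x_\epsilon)$ for small $\epsilon$ (since $u_\epsilon\to u$ and $\Psi$ is continuous), so the $\max$ in \eqref{dpp} is attained by the mean-value part and \eqref{dpp} holds there with equality; the same expansion then yields $\Delta_p\phi(x_0)\ge0$. Finally, Lemma \ref{uni} gives uniqueness of the viscosity solution, so the full family $u_\epsilon$ (not just a subsequence) converges uniformly on $\bar\Omega$ to this unique $u$, completing the proof of Theorem \ref{approx}.
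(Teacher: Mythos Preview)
Your overall architecture---compactness via an asymptotic-equicontinuity estimate, then viscosity stability from the DPP, then uniqueness via Lemma \ref{uni}---matches the paper exactly, and your treatment of steps two through four is essentially what the paper does (including the Taylor expansion and the observation that $u(x_0)>\Psi(x_0)$ forces equality in \eqref{dpp} at the approximate maximizers).

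The genuine divergence is in how you propose to obtain the oscillation bound \eqref{key1}. You suggest a direct coupling of two games started at nearby points $x_0,y_0\in\bar\Omega$, with Player~II in one game mimicking Player~I in the other. The paper does \emph{not} do this. Instead it decomposes the estimate into two pieces: (a) a \emph{boundary} oscillation estimate (Lemma \ref{closetoboundary}), where $y_0\in\partial\Omega$ is fixed and one uses a single game from $x_0$ with the explicit ``pull toward an exterior ball center $z_0$'' strategy \eqref{sn}, together with a radial barrier $v_0$ to control $\mathbb{E}[\tau]$; and (b) an \emph{interior} estimate (Corollary \ref{ascoli}), obtained purely analytically by translating the solution, $\tilde u_\epsilon(z)=u_\epsilon(z-(x_0-y_0))+\eta$, and applying the comparison principle (Corollary \ref{comparison}) on a slightly smaller domain. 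No two-game coupling is ever needed.

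Your coupling route is not obviously wrong, but it is the hard way, and you flag it yourself as ``the main obstacle.'' Coupling tug-of-war tokens is delicate because the $\alpha$-moves are strategic and history-dependent: if Player~II in game one copies Player~I's move from game two, you have not specified what Player~I does in game one, and the $\sup_{\tau,\sigma_I}\inf_{\sigma_{II}}$ order means you must cope with an adversarial $\sigma_I$ there; keeping $|x_n-y_n|$ small under this adversary, while also handling asynchronous exit times and the optimal-stopping $\bar\tau$, is nontrivial. The paper's split avoids all of this: the probabilistic work is confined to the one-point-on-$\partial\Omega$ case (where a single pull-to-exit strategy suffices), and the interior case is handled by the translation-invariance of \eqref{dpp} plus comparison, which is a two-line argument once the boundary estimate is in hand.
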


\begin{proof}
{\bf 1.} We first prove the uniform convergence of $u_\epsilon$, as
$\epsilon\to 0$, in $\bar\Omega$. This is achieved by verifying the
assumptions of the following version of the Ascoli-Arzel\'a theorem,
valid for equibounded (possibly discontinuous) functions with ``uniformly vanishing oscillation'':

\begin{lemma}\label{AA}\cite{MPR} 
Let $u_\epsilon:\bar\Omega\to\mathbb{R}$ be a set of functions such
that:

(i) $~\exists C>0\quad \forall \epsilon >0
\qquad \|u_\epsilon\|_{L^\infty(\bar\Omega)} \leq C,$

(ii) $~\forall \eta>0\quad \exists r_0, \epsilon_0>0\quad \forall
\epsilon<\epsilon_0 \quad \forall x_0,y_0\in\bar\Omega \qquad |x_0-y_0|<r_0
\implies |u_\epsilon(x_0) - u_\epsilon(y_0)|<\eta$

\noindent Then, a subsequence of $u_\epsilon$ converges uniformly in
$\bar\Omega$, to a continuous function $u$.
\end{lemma}

Clearly, solutions $u_\epsilon$ to (\ref{dpp}) as in the statement of
Theorem \ref{approx} are uniformly bounded,
by the boundedness of $F$.  The crucial step in the proof of 
condition (ii) above is achieved by estimating the oscillation of
$u_\epsilon$ close to the boundary.

\begin{lemma}\label{closetoboundary}
Under the assumptions of Theorem \ref{approx},
let $u_\epsilon:X\to\mathbb{R}$ be the $\epsilon$-$p$-superharmonious
solution to (\ref{dpp}). Then, for every $\eta>0$ there exist $r_0,
\epsilon_0>0$ such that we have:
\begin{equation}\label{bobo}
\forall \epsilon<\epsilon_0 \quad \forall y_0\in\partial\Omega \quad 
\forall x_0\in\bar\Omega \quad |x_0 - y_0|< r_0 \implies
|u_\epsilon(x_0) - u_\epsilon (y_0)|<\eta.
\end{equation}
\end{lemma}

We postpone the proof of Lemma \ref{closetoboundary} to Section \S
\ref{bo2}.  We now have:

\begin{corollary}\label{ascoli}
Under the assumptions of Theorem \ref{approx},
let $\{u_\epsilon\}$ be the sequence of $\epsilon$-$p$-superharmonious
solutions to (\ref{dpp}). Then $\{u_\epsilon\}$ satisfies condition (ii) in Lemma \ref{AA}.
\end{corollary}
\begin{proof}
Fix $\eta>0$ and let $r_0, \epsilon_0$ be as in Lemma
\ref{closetoboundary} so that (\ref{bobo}) holds with
$\eta/3$ instead of $\eta$. Since $\Psi$ is Lipschitz, we may without
loss of generality also assume that:
\begin{equation}\label{comppsi}
\forall x,y\in X \quad |x-y|< r_0 \implies |\Psi(x) - \Psi(y)|<\eta.
\end{equation}
Note that, consequently, we have:
\begin{equation}\label{bobo2}
\forall \epsilon<\epsilon_0\quad \forall x_0, y_0\in
\tilde\Gamma_{r_0/3}\quad |x_0-y_0|<r_0/3 \implies |u_\epsilon(x_0) -
u_\epsilon(y_0)|\leq \eta,
\end{equation}
where for any $\delta>0$ we denote:
$$\tilde\Gamma_\delta=\{x\in\bar\Omega; ~ \mbox{dist}(x, \partial\Omega)\leq \delta\}.$$
In particular, the same implication as in (\ref{bobo2}) holds for
$x_0\in\tilde\Gamma_{r_0/6}$ and $y_0\in\bar\Omega$ when $|x_0-
y_0|<r_0/6$.

Let now $x_0, y_0\in\Omega\setminus\tilde\Gamma_{r_0/6}$ and assume that
$|x_0-y_0|<r_0/6$. Define the bounded Borel function $\tilde F:\tilde
\Gamma_{r_0/6}\to\mathbb{R}$ and the Lipschitz obstacle
$\tilde\Psi:\mathbb{R}^N\to\mathbb{R}$ by:
$$\tilde F(z) = u_\epsilon(z-(x_0-y_0)) + \eta, \qquad \tilde \Psi(z) = \Psi(z-(x_0-y_0))+\eta.$$
Clearly: $\tilde F\geq\tilde\Psi$ in $\tilde\Gamma_{r_0/6}$, hence by
Theorem \ref{exists} there exists a solution $\tilde
u_\epsilon:\Omega\to\mathbb{R}$ to (\ref{dpp}) subject to the boundary data
$\tilde F$ on $\tilde\Gamma_{r_0/6}$, and to the obstacle constraint
$\tilde\Psi$. Note that by the uniqueness of such solution, there must be:
$$\tilde u_\epsilon(z) = u_\epsilon(z-(x_0-y_0))+\eta$$
On the other hand: $\tilde F\geq u_\epsilon$ in $\tilde\Gamma_{r_0/6}$
by (\ref{bobo2}), and also: $\tilde\Psi\geq \Psi$ in $\Omega$ by
(\ref{comppsi}). Corollary \ref{comparison} now implies that $\tilde
u_\epsilon\geq u_\epsilon$ in $\bar\Omega$ and we get:
$$u_\epsilon(x_0) -u_\epsilon(y_0) \leq \tilde u_\epsilon(x_0) -
u_\epsilon(y_0) = u_\epsilon (y_0) +\eta - u_\epsilon(y_0) =\eta.$$
Exchanging $x_0$ with $y_0$, the same argument yields 
$|u_\epsilon(x_0) -u_\epsilon(y_0)|<\eta$. 
\end{proof}

\medskip

{\bf 2.} We now prove that the uniform limit of $u_\epsilon$ is a
viscosity solution to the obstacle problem (\ref{obstacle}). Clearly,
$u=F$ on $\partial\Omega$ and $u\geq \Psi$ in $\Omega$ because each
$\epsilon$-$p$-superharmonious function $u_\epsilon$ has the same
properties. We show that (i) in Definition \ref{viscosity} holds.

Let $\phi$ be a test function as in (\ref{ma1}). Since $x_0$ is the
minimum of the continuous function $u-\phi$, one can find a sequence
of points $x_\epsilon$ converging to $x_0$ as $\epsilon\to 0$, and
such that:
\begin{equation}\label{apro}
u_\epsilon(x_\epsilon) - \phi(x_\epsilon)\leq \inf_{\bar\Omega}~
(u_\epsilon - \phi) + \epsilon^3.
\end{equation}
To prove this statement, for every $j\geq 1$ let $a_j =
\displaystyle{\min_{\bar\Omega\setminus B_{{1}/{j}}(x_0)} (u-\phi)} >0$ and let
$\epsilon_j>0$ be such that: 
$$\forall \epsilon\leq \epsilon_j \qquad \|u_\epsilon -
u\|_{L^\infty(\Omega)} \leq\frac{1}{2}a_j.$$
Without loss of generality $\{\epsilon_j\}$ is decreasing
to $0$. Now, for $\epsilon\in (\epsilon_{j+1},\epsilon_j]$ let
$x_\epsilon\in B_{1/j}(x_0)$ satisfy:
$$u_\epsilon(x_\epsilon) - \phi(x_\epsilon) \leq \inf_{B_{1/j}(x_0)}~
(u_\epsilon - \phi) + \epsilon^3.$$
We finally conclude (\ref{apro}) by noting that also for every $x\in \bar\Omega\setminus B_{1/j}(x_0)$ there
holds:
\begin{equation*}
\begin{split}
u_\epsilon(x) - \phi(x) & \geq u(x) - \phi(x) - \|u_\epsilon -
u\|_{L^\infty(\Omega)} \geq a_j - \frac{1}{2} a_j \geq \|u_\epsilon -
u\|_{L^\infty(\Omega)}\\ & \geq u_\epsilon(x_0) - u(x_0) = u_\epsilon(x_0)
- \phi(x_0) \geq u_\epsilon(x_\epsilon) - \phi(x_\epsilon) - \epsilon^3.
\end{split}
\end{equation*}

By (\ref{apro}) it follows that for all $x\in\Omega$ we have:
$u_\epsilon(x) \geq u_\epsilon(x_\epsilon) - \phi(x_\epsilon) +
\phi(x) -\epsilon^3$ and hence:
\begin{equation}\label{mama}
\begin{split}
u_\epsilon(x_\epsilon) & \geq \frac{\alpha}{2} \sup_{B_\epsilon(x_\epsilon)} u_\epsilon + \frac{\alpha}{2}
\sup_{B_\epsilon(x_\epsilon)} u_\epsilon
+\beta\fint_{B_\epsilon(x_\epsilon)} u_\epsilon \\ & \geq
\big(u_\epsilon(x_\epsilon) - \phi(x_\epsilon)  - \epsilon^3 \big) +  \Big(
\frac{\alpha}{2} \sup_{B_\epsilon(x_\epsilon)} \phi + \frac{\alpha}{2}\sup_{B_\epsilon(x_\epsilon)}\phi
+\beta\fint_{B_\epsilon(x_\epsilon)} \phi\Big),
\end{split}
\end{equation}
which further implies, for $\bar x_\epsilon\in\mbox{argmin}_{\bar B_\epsilon(x_\epsilon)}\phi$:
\begin{equation}\label{man}
\begin{split}
\epsilon^3 & \geq \Big(
\frac{\alpha}{2} \sup_{B_\epsilon(x_\epsilon)} \phi + \frac{\alpha}{2}\sup_{B_\epsilon(x_\epsilon)}\phi
+\beta\fint_{B_\epsilon(x_\epsilon)} \phi\Big) - \phi(x_\epsilon) \\ &
\geq \frac{\beta\epsilon^2}{2(N+2)} \Big((p-2) \Big\langle
\nabla^2\phi(x_\epsilon) \frac{(\bar x_\epsilon -
  x_\epsilon)}{\epsilon}, \frac{\bar x_\epsilon -
  x_\epsilon}{\epsilon}\Big\rangle + \Delta\phi(x_\epsilon)\Big) +o(\epsilon^2).
\end{split}
\end{equation}

For completeness, we recall now \cite{MPR0} the proof of the second inequality in (\ref{man}).
Taylor expand the regular function $\phi$ at $x_\epsilon$, to get:
\begin{equation*}
\min_{\bar B_\epsilon(x_\epsilon)}\phi = \phi(\bar x_\epsilon) =
\phi(x_\epsilon) + \langle\nabla \phi(x_\epsilon), \bar x_\epsilon -
x_\epsilon\rangle + \frac{1}{2} \big\langle\nabla^2 \phi(x_\epsilon)(\bar x_\epsilon -
x_\epsilon), (\bar x_\epsilon - x_\epsilon)\big\rangle + o(\epsilon^2).
\end{equation*}
On the other hand, in a similar manner:
\begin{equation*}
\max_{\bar B_\epsilon(x_\epsilon)}\phi \geq \phi(x_\epsilon
+(x_\epsilon - \bar x_\epsilon)) =
\phi(x_\epsilon) - \langle\nabla \phi(x_\epsilon), \bar x_\epsilon -
x_\epsilon\rangle + \frac{1}{2} \big\langle\nabla^2 \phi(x_\epsilon)(\bar x_\epsilon -
x_\epsilon), (\bar x_\epsilon - x_\epsilon)\big\rangle + o(\epsilon^2),
\end{equation*}
and again:
\begin{equation*}
\fint_{B_\epsilon(x_\epsilon)}\phi = \phi(\bar x_\epsilon) +
\frac{\epsilon^2}{2(N+2)} \Delta \phi(x_\epsilon) + o(\epsilon^2).
\end{equation*}
Consequently, we obtain:
\begin{equation*}
\begin{split}
&\Big(\frac{\alpha}{2} \max_{\bar B_\epsilon(x_\epsilon)} \phi +
\frac{\alpha}{2}\min_{\bar B_\epsilon(x_\epsilon)}\phi
+\beta\fint_{B_\epsilon(x_\epsilon)} \phi\Big) - \phi(x_\epsilon) \\ &
\qquad\qquad\qquad\qquad
\geq \frac{\alpha}{2} \Big\langle\nabla^2 \phi(x_\epsilon)(\bar x_\epsilon -
x_\epsilon), (\bar x_\epsilon - x_\epsilon)\Big\rangle +
\frac{\beta\epsilon^2}{2(N+2)} \Delta \phi(x_\epsilon) +
o(\epsilon^2),
\end{split}
\end{equation*}
which yields (\ref{man}), because $(\frac{\alpha}{2}) / (\frac{\beta\epsilon^2}{2(N+2)}) = \frac{p-2}{\epsilon^2}$.

\medskip

{\bf 3.} After dividing by $\epsilon^2$, (\ref{man}) becomes the
following asymptotic inequality, valid for $\epsilon\to 0$:
\begin{equation}\label{man2}
\begin{split}
\limsup_{\epsilon\to 0}\Big((p-2) \Big\langle
\nabla^2\phi(x_\epsilon) \frac{(\bar x_\epsilon -
  x_\epsilon)}{\epsilon}, \frac{\bar x_\epsilon -
  x_\epsilon}{\epsilon}\Big\rangle + \Delta\phi(x_\epsilon)\Big) \leq 0.
\end{split}
\end{equation}
Note now that:
\begin{equation}\label{man3}
\lim_{\epsilon\to 0} \frac{(\bar x_\epsilon -  x_\epsilon)}{\epsilon}
= -\frac{\nabla \phi(x_0)}{|\nabla \phi(x_0)|}.
\end{equation}
This follows by a simple blow-up argument. Indeed, the maps
$\phi_\epsilon(z) = \frac{1}{\epsilon} \big(\phi(x_\epsilon + \epsilon
z) - \phi(z_\epsilon)\big)$ converge uniformly on $\bar B_1(0)$ to the
linear map $\langle \nabla \phi(x_0),z\rangle$. Hence the limit of any converging
subsequence of their minimizers: $\frac{1}{\epsilon} (\bar x_\epsilon - x_\epsilon) \in
\mbox{argmin}_{\bar B_1(0)}\phi_\epsilon $ must be a minimizer of the
limiting function $\langle \nabla \phi(x_0),z\rangle$. This minimizer
is unique and equals: $-\nabla \phi(x_0) /|\nabla \phi(x_0)|$, proving (\ref{man3}).

In conclusion, (\ref{man3}) and (\ref{man2}) imply that:
\begin{equation*}
\begin{split}
\frac{1}{|\nabla \phi|^{p-2}}\Delta_p \phi(x_0) = 
(p-2) \Big\langle \nabla^2\phi(x_0) \frac{\nabla \phi(x_0)}{|\nabla
  \phi(x_0)|}, \frac{\nabla \phi(x_0)}{|\nabla \phi(x_0)|} \Big\rangle + \Delta\phi(x_0)\leq  0,
\end{split}
\end{equation*}
which yields the validity of condition  (i) in Definition \ref{viscosity}, in view of  (\ref{plapla}).

\medskip

{\bf 4.} To prove condition (ii) in Definition \ref{viscosity}, let
$\phi$ be as in (\ref{ma2}). One can follow the argument as in steps
2. and 3. above, taking $x_\epsilon$ to be the approximate maximizers
of $u_\epsilon-\phi$. The first inequality in (\ref{mama}) is then
replaced by equality because $u(x_0)>\Psi(x_0)$, and we consequently obtain:
\begin{equation*}
\begin{split}
u_\epsilon(x_\epsilon) & \leq 
\big(u_\epsilon(x_\epsilon) - \phi(x_\epsilon)  + \epsilon^3 \big) +  \Big(
\frac{\alpha}{2} \sup_{B_\epsilon(x_\epsilon)} \phi + \frac{\alpha}{2}\sup_{B_\epsilon(x_\epsilon)}\phi
+\beta\fint_{B_\epsilon(x_\epsilon)} \phi\Big),
\end{split}
\end{equation*}
while the counterpart of (\ref{man}), written for $\bar
x_\epsilon\in\mbox{argmax}_{\bar B_\epsilon(x_\epsilon)}\phi$ is:
\begin{equation*}
\begin{split}
-\epsilon^3  \leq \frac{\beta\epsilon^2}{2(N+2)} \Big((p-2) \Big\langle
\nabla^2\phi(x_\epsilon) \frac{(\bar x_\epsilon -
  x_\epsilon)}{\epsilon}, \frac{\bar x_\epsilon -
  x_\epsilon}{\epsilon}\Big\rangle + \Delta\phi(x_\epsilon)\Big) +o(\epsilon^2).
\end{split}
\end{equation*}
Similarly to step 3. above, we conclude: $\Delta_p\phi(x_0)\geq 0$.
The proof of Theorem \ref{approx} is complete.
\end{proof}

\section{Estimates close to the boundary: a proof of Lemma \ref{closetoboundary}}\label{bo2}

In this Section, by $C$ we denote constants that depend only on the
general setup of the problem, i.e. on $N$, $\Omega$, $p$, $\alpha$ and
$\beta$, but not on $u$, $x_0$, $F$ or $\Psi$. By $C_F, C_\Psi$ or $C_{F, \Psi}$
we denote constants depending additionally on $F$, $\Psi$, or on both $F$
and $\Psi$. 

Let $x_0\in\Omega$ and $y_0\in\partial\Omega$. Assume that we have
fixed a particular strategy $\sigma_{0, II}$  of Player II. Then, by (\ref{value}):
$$u_\epsilon(x_0) - u_\epsilon(y_0) \leq \sup_{\tau, \sigma_I}~
\mathbb{E}^{x_0}_{\tau,\sigma_I, \sigma_{0, II}} [G\circ x_\tau - F(y_0)].$$
Note that for every $x\in X$:
$$G(x) - F(y_0) \leq \chi_\Gamma(x) \big(F(x) - F(y_0)\big) +
\chi_\Omega (x) \big(\Psi(x) - \Psi(y_0)\big) \leq C_{F,\Psi} |x-y_0|,$$
thus:
\begin{equation}\label{one1}
u_\epsilon(x_0) - u_\epsilon(y_0) \leq C_{F,\Psi} \sup_{\tau, \sigma_I}~
\mathbb{E}^{x_0}_{\tau,\sigma_I, \sigma_{0, II}} [|x_\tau - y_0|].
\end{equation}
On the other hand, for a fixed strategy $\sigma_{0, I}$, again in view
of (\ref{value}) it follows that:
\begin{equation}\label{two1}
\begin{split}
u_\epsilon(x_0) - u_\epsilon(y_0) & \geq \inf_{\sigma_{II}}~
\mathbb{E}^{x_0}_{\tau_0,\sigma_{0, I}, \sigma_{II}} [G\circ x_{\tau_0}
- F(y_0)] = \inf_{\sigma_{II}}~
\mathbb{E}^{x_0}_{\tau_0,\sigma_{0, I}, \sigma_{II}} [F\circ x_{\tau_0}
- F(y_0)] \\ & \geq -C_F \sup_{\sigma_{II}}~
\mathbb{E}^{x_0}_{\tau_0,\sigma_{0, I}, \sigma_{II}} [|x_\tau - y_0|].
\end{split}
\end{equation}
We will now prove that, with $\sigma_{0, I}$ and $\sigma_{0, II}$
chosen appropriately, one has:
\begin{equation}\label{three}
\begin{split}
\forall 0<\delta\ll 1& \quad  \forall \epsilon<\min
\big(\frac{\beta}{2C_\delta}, \frac{\delta}{3}\big) \quad \\ & \sup_{\tau, \sigma_{I}}~
\mathbb{E}^{x_0}_{\tau,\sigma_I, \sigma_{0, II}} [|x_\tau - y_0|] +
\sup_{\tau, \sigma_{II}}~ \mathbb{E}^{x_0}_{\tau,\sigma_{0, I}, \sigma_{II}} [|x_\tau - y_0|]
\leq C \delta + C_\delta (|x_0 - y_0| + \epsilon), 
\end{split}
\end{equation}
where the supremum is taken over all admissible stopping times
$\tau\leq \tau_0$. The constant $C_\delta$ depends only on the
associated parameter $\delta$ in (\ref{three}). Clearly, (\ref{three})
with (\ref{one1}) and (\ref{two1}) will imply (\ref{bobo}).

\begin{remark}
Denote by $u_\epsilon^0$ the $\epsilon$-$p$-superharmonious function subject to the
same boundary condition $F$ as $u_\epsilon$ on $\Gamma$, but in the absence
of any obstacle. It satisfies:
$$u_\epsilon^0(x_0) = \sup_{\sigma_I} \inf_{\sigma_{II}}
\mathbb{E}^{x_0}_{\tau_0, \sigma_I, \sigma_{II}} [F\circ x_{\tau_0}] 
= \inf_{\sigma_{II}}  \sup_{\sigma_I}
\mathbb{E}^{x_0}_{\tau_0, \sigma_I, \sigma_{II}} [F\circ
x_{\tau_0}].$$ 
Equivalently, $u_\epsilon^0$ solves (\ref{dpp}) with $\Psi=\mbox{const}
<\min_\Gamma F$. It is clear that:
\begin{equation}\label{com}
u_\epsilon\geq u_\epsilon^0 \quad \mbox{ in } \bar\Omega.
\end{equation}
The following estimate has been proven in \cite{MPR}:
\begin{equation}\label{u0e}
\forall y_0\in\partial\Omega, ~ x_0\in\Omega\quad \forall 0<\delta\ll
1\quad |u_\epsilon^0(x_0) - u_\epsilon^0(y_0)|\leq C_F \delta
+ C_\delta (|x_0 - y_0| + \epsilon).
\end{equation}
Note that the lower bound follows directly by (\ref{com}) and (\ref{u0e}):
$$u_\epsilon(x_0) - u_\epsilon(y_0) \geq u_\epsilon^0(x_0) -
u_\epsilon^0(y_0) \geq -\Big( C_F \delta + C_\delta (|x_0 - y_0| + \epsilon)\Big).$$
Also, the upper bound is straightforward in case when $x_0$ belongs to the contact
set, i.e. when: $u_\epsilon(x_0) = \Psi(x_0)$, because then in view of (\ref{dwa2}):
$$u_\epsilon(x_0) - u_\epsilon(y_0)  = \Psi(x_0) - F(y_0) \leq
\Psi(x_0) - \Psi(y_0)\leq C_\Psi |x_0 - y_0|.$$
It remains hence to prove a similar bound for the case
$x_0\in\Omega\setminus A_\epsilon$.
We will in fact reprove the inequality (\ref{u0e}), in a slightly more
general setting of the obstacle $\epsilon$-$p$-superharmonious function
$u_\epsilon$. The scheme of proof of (\ref{three}) below follows \cite{MPR}
but we fill in all the details.
\endproof
\end{remark}

\bigskip

\noindent{\bf Proof of Lemma \ref{closetoboundary}.}

{\bf 1.} Let $\delta>0$ and  $z_0\in\mathbb{R}^N\setminus\Omega$ satisfy:
$B_\delta(z_0)\cap\bar\Omega = \{y_0\}.$ Define strategy
$\sigma_{0, II}$ for Player II:
\begin{equation}\label{sn}
\sigma_{0, II}^n(x_0,\ldots x_n) = \sigma_{0, II}^n(x_n) = \left\{\begin{array}{ll} x_n
    +(\epsilon-\epsilon^3)\frac{z_0-x_{n}}{|z_0 - x_n|} & \mbox{ if }
    x_n\in\Omega\\ x_n & \mbox{ if } x_n\in\Gamma.
\end{array}\right.
\end{equation}
Let $\sigma_I$ be any strategy for Player I and let $\tau\leq\tau_0$
be any admissible stopping time. 

Firstly, notice that for all $\epsilon<\delta/3$ we have:
\begin{equation}\label{aiuto}
\forall x\in\Omega \qquad \fint_{B_\epsilon(x)} |w-z_0|~\mbox{d}w
\leq |x-z_0| + C_\delta \epsilon^2.
\end{equation}
This is because the function $f(w) = |w-z_0|$ is smooth in the domain
$\Omega +B_{\delta/2}(0)$ and hence by taking Taylor's expansion and
averaging, we get: $\fint_{B_\epsilon(x)} f = u(x) +
\frac{\epsilon^2}{2(N+2)} \Delta f(x) + o(\epsilon^2)$.

Take $C=C_\delta +1$. 
By Lemma \ref{condi}, the definition (\ref{sn}), and (\ref{aiuto}) it follows that:
\begin{equation*}
\begin{split}
\forall (x_0,&\ldots x_{n-1})\not\in A_{n-1}^\tau \qquad 
\mathbb{E}^{x_0}_{\tau, \sigma_I, \sigma_{0, II}} \left\{|x_n -z_0|
  -  C\epsilon^2n | \mathcal{F}_{n-1}^{x_0}\right\}(x_0,\ldots
x_{n-1}) \\ & \leq \frac{\alpha}{2} |\sigma_I^{n-1}(x_0,\ldots x_{n-1}) - z_0| +
\frac{\alpha}{2} |\sigma_{0, II}^{n-1}(x_{n-1}) - z_0|  + \beta\fint_{B_\epsilon(x_{n-1})}
|w-z_0|~\mbox{d}w - C\epsilon^2 n \\ & \leq
\frac{\alpha}{2} (|x_{n-1}-z_0| + \epsilon) + 
\frac{\alpha}{2} (|x_{n-1}-z_0| - (\epsilon -\epsilon^3)) 
+ \beta (|x_{n-1} - z_0| + C_\delta \epsilon^2) - C\epsilon^2 n \\ & \leq |x_{n-1}- z_0| - C\epsilon^2(n-1),
\end{split}
\end{equation*}
while for $ (x_0,\ldots x_{n-1})\in A_{n-1}^\tau$ one has: 
$\mathbb{E}^{x_0}_{\tau, \sigma_I, \sigma_{0, II}} \left\{|x_n -z_0| 
  -  C\epsilon^2n | \mathcal{F}_{n-1}^{x_0}\right\}(x_0,\ldots
x_{n-1})  = |x_{n-1} -z_0| -  C\epsilon^2n \leq  |x_{n-1} -z_0| -
C\epsilon^2(n-1)$.
In any case, we see that $\{|x_n -z_0| -  C\epsilon^2n\}_{n\geq 0}$ is
a supermartingale with respect to the filtration $\{\mathcal{F}^{x_0}_n\}$. Applying Doob's
theorem to the bounded stopping times $\tau\wedge n$ and $0$, we obtain:
\begin{equation*}
\mathbb{E}^{x_0}_{\tau,\sigma_I,\sigma_{0, II}}[|x_{\tau\wedge n} -
z_0|] - C\epsilon^2 \mathbb{E}^{x_0}_{\tau,\sigma_I,\sigma_{0, II}}
[\tau\wedge n] \leq |x_0 - z_0|.
\end{equation*}
Consequently, and further using
the dominated and the monotone convergence theorems while passing with
$n\to\infty$ and recalling (\ref{koniec}), we obtain:
\begin{equation}\label{pies}
\mathbb{E}^{x_0}_{\tau,\sigma_I,\sigma_{0, II}}[|x_\tau - y_0|] \leq
\mathbb{E}^{x_0}_{\tau,\sigma_I,\sigma_{0, II}}[|x_\tau -
z_0|] + \delta \leq |x_0 - y_0| + 2\delta + C_\delta\epsilon^2
\mathbb{E}^{x_0}_{\tau,\sigma_I,\sigma_{0, II}} [\tau]. 
\end{equation}

\medskip

{\bf 2.} Towards estimating the expectation
$\mathbb{E}^{x_0}_{\tau,\sigma_I,\sigma_{0, II}} [\tau]$ in (\ref{pies}),
we first observe the following simple general result:

\begin{lemma}\label{transitionlemma}
Let $Y\subset\mathbb{R}^N$ be a bounded, open set. Let $x_0\in Y$ and let
$\{\mathbb{P}^{n, x_0}\}_{n\geq 1}$, $\{\bar{\mathbb{P}}^{n,
  x_0}\}_{n\geq 1}$  be two consistent sequences of probability
measures on $Y^{\infty, x_0}$ defined as in (\ref{partial}), where
the filtration $\{\mathcal{F}^{x_0}_n\}_{n\geq 1}$ is as in
subsection \S \ref{start}, and the transition probabilities are denoted,
respectively: $\{\gamma_n[x_0, x_1,\ldots x_n]\}$ and $\{\bar\gamma_n[x_0,
x_1,\ldots x_n]\}$. Let $A\in\mathcal{F}^{n, x_0}$ have the property:
\begin{equation}\label{assu}
\forall \omega=(x_0, x_1,\ldots )\in A \quad \forall 0\leq k < n\qquad
\gamma_k[x_0, x_1,\ldots x_k] = \bar \gamma_k[x_0, x_1,\ldots x_k].
\end{equation}
Then $\mathbb{P}^{n,x_0}(A) = \bar{\mathbb{P}}^{n,x_0}(A)$.
\end{lemma}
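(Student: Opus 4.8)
The plan is to unwind the iterated‐integral definition (\ref{partial}) of $\mathbb{P}^{n,x_0}$ and $\bar{\mathbb{P}}^{n,x_0}$ into a short backward recursion of bounded Borel functions, and then compare the two recursions pointwise. Identifying, as in subsection \S\ref{start}, the set $A\in\mathcal{F}_n^{x_0}$ with a Borel subset of $Y^n$ via the coordinates $(x_1,\ldots,x_n)$ (the coordinate $x_0$ being fixed throughout), I would set $h_n=\chi_A:Y^n\to[0,1]$ and define, for $0\le k<n$,
$$h_k(x_1,\ldots,x_k)=\int_Y h_{k+1}(x_1,\ldots,x_k,x_{k+1})\,\mathrm{d}\gamma_k[x_0,x_1,\ldots,x_k](x_{k+1}),$$
and analogously $\bar h_k$ with the kernels $\bar\gamma_k$. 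The measurability property of the transition probabilities (the measurability lemma accompanying (\ref{partial})), together with a standard monotone–class argument, guarantees that each $h_k,\bar h_k$ is bounded and Borel, so the integrals make sense; the same monotone–class argument, applied to the $\pi$-system of product sets $A_1\times\cdots\times A_n$ on which (\ref{partial}) computes the measure directly, shows $\mathbb{P}^{n,x_0}(A)=h_0$ and $\bar{\mathbb{P}}^{n,x_0}(A)=\bar h_0$. Thus the lemma reduces to proving $h_0=\bar h_0$.

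First I would record an elementary support property: for every $0\le m\le n$ and every $(x_1,\ldots,x_m)\in Y^m$ for which there is \emph{no} $(x_{m+1},\ldots,x_n)\in Y^{n-m}$ with $(x_1,\ldots,x_n)\in A$, one has $h_m(x_1,\ldots,x_m)=0$, and likewise $\bar h_m(x_1,\ldots,x_m)=0$. This follows by downward induction on $m$: the case $m=n$ is just $(x_1,\ldots,x_n)\notin A\Rightarrow\chi_A=0$, and in the induction step, if $(x_1,\ldots,x_m)$ admits no continuation into $A$ then neither does $(x_1,\ldots,x_m,x_{m+1})$ for any $x_{m+1}$, so $h_{m+1}(x_1,\ldots,x_m,\cdot)\equiv 0$ and the defining integral vanishes. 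I stress that this is a purely pointwise statement, so it is harmless that the set of ``prefixes of $A$'' need not be Borel — this is precisely the technicality I want to sidestep.

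The heart of the proof is then the claim that $h_k=\bar h_k$ on $Y^k$ for every $0\le k\le n$, again by downward induction on $k$. The base case is $h_n=\chi_A=\bar h_n$. For the step, assume $h_{k+1}=\bar h_{k+1}$ and fix $(x_1,\ldots,x_k)\in Y^k$, distinguishing two cases. If $(x_1,\ldots,x_k)$ can be continued to a point of $A$, then there is a history $\omega=(x_0,x_1,\ldots)\in A$ whose coordinates of index $0$ through $k$ are $x_0,x_1,\ldots,x_k$, so hypothesis (\ref{assu}), applied with the index $k<n$, gives $\gamma_k[x_0,x_1,\ldots,x_k]=\bar\gamma_k[x_0,x_1,\ldots,x_k]$; combined with $h_{k+1}=\bar h_{k+1}$ this yields $h_k(x_1,\ldots,x_k)=\bar h_k(x_1,\ldots,x_k)$ directly from the defining integrals. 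If instead $(x_1,\ldots,x_k)$ cannot be continued to a point of $A$, then by the support property $h_{k+1}(x_1,\ldots,x_k,\cdot)\equiv 0\equiv\bar h_{k+1}(x_1,\ldots,x_k,\cdot)$, and both $h_k(x_1,\ldots,x_k)$ and $\bar h_k(x_1,\ldots,x_k)$ equal $0$. Either way equality holds, the induction closes, and evaluating at $k=0$ gives $\mathbb{P}^{n,x_0}(A)=h_0=\bar h_0=\bar{\mathbb{P}}^{n,x_0}(A)$.

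I expect the only genuinely delicate points to be the measurability bookkeeping — verifying that the functions $h_k$ are Borel, and that $\mathbb{P}^{n,x_0}(A)$ really equals $h_0$ for an arbitrary Borel $A$ rather than just for product sets — both of which are handled by the measurability of the kernels and a monotone–class/Dynkin argument. Everything else is a clean two-layer downward induction, and the one conceptual trap (projections of Borel sets need not be Borel) is avoided entirely by stating the support property as a pointwise vanishing rather than as an assertion about a prefix set.
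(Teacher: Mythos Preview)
Your proof is correct and takes a genuinely different route from the paper's. The paper argues by forward induction on $n$: it approximates $A$ from outside by a countable disjoint union of rectangles $A_1^i\times A_2^i$ up to an $\eta$-error, and then, to apply the inductive hypothesis at level $n-1$, has to project $A\cap(A_1^i\times A_2^i)$ down to $\{x_0\}\times Y^{n-1}$. Since projections of Borel sets are in general only analytic, the paper invokes universal measurability to sandwich each projection between Borel sets of equal $(n-1)$-measure, and only then can it compute.

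Your backward recursion $h_k,\bar h_k$ avoids this detour entirely. The key observation that ``if $(x_1,\ldots,x_k)$ has no continuation into $A$ then $h_{k+1}(x_1,\ldots,x_k,\cdot)\equiv 0$'' is a purely pointwise statement, so you never need to know whether the prefix set is Borel; this is exactly the conceptual trap you flag and then sidestep. The cost is the monotone-class bookkeeping (that each $h_k$ is Borel and that $h_0=\mathbb{P}^{n,x_0}(A)$ for arbitrary Borel $A$, not just rectangles), but this is standard kernel theory and strictly more elementary than the analytic-set machinery. Your argument is both shorter and conceptually cleaner; the paper's approach has the minor advantage of being closer to the definition (\ref{partial}) and not requiring Fubini/monotone-class beyond what Kolmogorov's theorem already uses.
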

\begin{proof}
We prove the lemma by induction on $n$. For $n=1$, the result is
trivially true because $\mathbb{P}^{1, x_0} =  \bar{\mathbb{P}}^{1, x_0}
=\gamma_0[x_0]$.

Let now $A\subset \{x_0\}\times Y^{n}$ be a Borel set. Fix
$\eta>0$ and find the covering $A\subset\bigcup_{i=1}^\infty (A_1^i\times A_2^i)$
where each $A_1^i\subset \{x_0\}\times Y^{n-1}$ and $A_2^i\subset Y$
is a Borel set, such that the rectangles $\{A_1^i\times A_2^i\}$ are pairwise
disjoint, and such that:
\begin{equation}\label{assu1}
0\leq \Big(\sum_{i=1}^\infty \mathbb{P}^{n, x_0} (A_1^i\times A_2^i) -
\mathbb{P}^{n, x_0}(A) \Big) + \Big(\sum_{i=1}^\infty
\bar{\mathbb{P}}^{n,  x_0} (A_1^i\times A_2^i) - \bar{\mathbb{P}}^{n, x_0}(A) \Big) \leq\eta.
\end{equation}
For each $i\geq 1$ consider the Borel set $A^i = A\cap (A^i_1 \times
A^i_2)$. Its projection:
$$\pi(A^i) = \{(x_0, x_1,\ldots x_{n-1}):~ \exists x_n\in Y \quad (x_0,\ldots x_n)\in A^i\}$$
does not have to be Borel (compare Remark \ref{remi}), but it is an analytic set \cite{RS} and
hence it is measurable with respect to completions of Borel
measures. Hence, there exists Borel sets $B_1^i, C_1^i\subset
\{x_0\}\times Y^{n-1}$ such that:
$$B_1^i\subset\pi(A^i)\subset C_1^i \quad \mbox{and} \quad
\mathbb{P}^{n-1, x_0}(C_1^i\setminus B_1^i) = \bar{\mathbb{P}}^{n-1,
  x_0}(C_1^i\setminus B_1^i) = 0.$$
We have then:
\begin{equation}\label{assu2}
\begin{split}
\mathbb{P}^{n, x_0}(B_1^i\times A_2^i) & = \int_{B_1^i}\gamma_{n-1}[x_0,\ldots x_{n-1}]
(A_2^i)~\mbox{d}\mathbb{P}^{n-1, x_0} \\ & = 
\int_{B_1^i}\bar\gamma_{n-1}[x_0,\ldots x_{n-1}] (A_2^i)~\mbox{d}\bar{\mathbb{P}}^{n-1, x_0} = 
\bar{\mathbb{P}}^{n, x_0}(B_1^i\times A_2^i), 
\end{split}
\end{equation}
where we used the induction assumption to conclude that
$\mathbb{P}^{n-1, x_0}\lfloor B_1^i = \bar{\mathbb P}^{n-1,
  x_0}\lfloor B_1^i$. 

Further, in view of  (\ref{assu1}) and the fact that: 
$$\mathbb{P}^{n,
  x_0}((C_1^i\setminus B_1^i) \times A_2^i) = \int_{C_1^i\setminus
  B_1^i}  \gamma_{n-1}[x_0,\ldots x_{n-1}](A_2^i)~\mbox{d}\mathbb{P}^{n-1,
  x_0}= 0,$$ 
there holds:
\begin{equation*}
\begin{split}
|\mathbb{P}^{n, x_0}(A) - \sum_{i=1}^\infty \mathbb{P}^{n,
  x_0} (B_1^i\times A_2^i) | & = |\sum_{i=1}^n\mathbb{P}^{n, x_0}(A^i) - \sum_{i=1}^\infty \mathbb{P}^{n,
  x_0} (C_1^i\times A_2^i) | \\ & \leq |\sum_{i=1}^n\mathbb{P}^{n, x_0}(A^i) - \sum_{i=1}^\infty \mathbb{P}^{n,
  x_0} (A_1^i\times A_2^i) | \leq \eta.
\end{split}
\end{equation*}
In a similar manner
$ |\bar{\mathbb{P}}^{n, x_0}(A) - \sum_{i=1}^\infty \bar{\mathbb{P}}^{n,
  x_0} (B_1^i\times A_2^i) | \leq \eta.$
In view of (\ref{assu2}), this implies: $ |\bar{\mathbb{P}}^{n,
  x_0}(A) - \mathbb{P}^{n, x_0}(A) |\leq 2\eta. $ Since $\eta>0$ was
arbitrary, the lemma follows.
\end{proof}

\medskip

Consider now a new ``game-board'' $Y=B_R(z_0)\supset X$ with
the same initial token position $x_0$. Let $\bar\sigma_I$ be an
extension of the strategy $\sigma_I$, given by:
\begin{equation}\label{easy_ext}
\forall (x_0, \ldots x_n)\in Y^{n+1}\qquad \bar\sigma^n_I (x_0, \ldots x_n) = \left\{\begin{array}{ll}
    \sigma_I^n(x_0,\ldots x_n) & \mbox{ if } (x_0, \ldots x_n)\in  X^{n+1}\\
x_n & \mbox{otherwise,}  \end{array}\right. 
\end{equation}
while:
\begin{equation*}
\bar\sigma_{0, II}^n(x_0,\ldots x_n) = \bar\sigma_{0, II}^n(x_n) =
\left\{\begin{array}{ll} x_n
    +(\epsilon-\epsilon^3)\frac{z_0-x_{n}}{|z_0 - x_n|} & \mbox{ if }
    x_n\in Y\setminus \bar B_\delta(z_0) \\ x_n & \mbox{ otherwise. }
\end{array}\right.
\end{equation*}
Let $\bar\tau_0: Y^{\infty, x_0}\to\mathbb{N}\cup \{+\infty\}$ be the
exit time into the ball $\bar B_\delta(z_0)$, i.e.:
$$ \bar\tau_0(\omega) = \min\{n\geq 0; ~  |x_n-z_0|\leq \delta\}$$
and let $\bar\tau: Y^{\infty, x_0} \to\mathbb{N}\cup \{+\infty\}$ be
a stopping time extending $\tau$, so that $\bar\tau\leq
\bar\tau_0$ and $\bar\tau_{\mid X^{\infty, x_0}} = \tau$.
Define the transition probabilities on $Y$ by:
\begin{equation*}
\begin{split}
& \forall n\geq 1 \quad \forall x_1, \ldots, x_n\in Y\qquad  
\bar\gamma_n[x_0, x_1, \ldots, x_n] = \\ & \qquad = \left\{ \begin{array}{ll}
\displaystyle{\frac{\alpha}{2} \delta_{\bar\sigma_I^n(x_0,, \ldots, x_n)} +
\frac{\alpha}{2} \delta_{\bar\sigma_{0, II}^n(x_n)} + 
\beta m(x_n)} & \mbox{ for } x_n\in Y\setminus \bar B_\delta(z_0) \vspace{2mm}\\ 
\alpha\delta_{x_n} + \beta m(x_n) & \mbox{ for } x_n\in
B_\delta(z_0)\setminus \bar B_{\delta - \epsilon}(z_0) \vspace{2mm} \\
\delta_{x_n} & \mbox{ for } x_n\in\bar B_{\delta-\epsilon}(z_0),
\end{array}\right.
\end{split}
\end{equation*}
where the probability $m(x)$ is uniform in the set $B_\epsilon(x)
\cap Y$ and is given by: 
\begin{equation}\label{em}
m(x) = \frac{\mathcal{L}_N\lfloor \big(B_\epsilon(x) \cap
  Y\big)}{|B_\epsilon(x) \cap Y|}.
\end{equation} 
Let now $\bar{\mathbb{P}}^{x_0}_{\bar\sigma_I, \bar\sigma_{0, II}}$ and 
$\bar{\mathbb{P}}^{n, x_0}_{\bar\sigma_I, \bar\sigma_{0, II}}$ be the
Borel probability measures on $Y^{\infty, x_0}$ defined as in 
subsection \S \ref{probab}. As in the proof of Lemma \ref{lemkoniec},
observe that:
\begin{equation}\label{koniec2}
\bar{\mathbb{P}}^{x_0}_{\bar\sigma_I, \bar\sigma_{0, II}} \big(\left\{\bar\tau_0 <
  +\infty\right\}\big) = 1,
\end{equation}
so that, in addition to (\ref{koniec}) there also holds:
${\mathbb{P}}^{x_0}_{\bar\sigma_I, \bar\sigma_{0, II}}
\big(\{\bar\tau <  +\infty\}\big) = 1.$ 

In view of   Lemma \ref{transitionlemma} we hence observe:
\begin{equation}\label{kot}
\begin{split}
\mathbb{E}^{x_0}_{\tau, \sigma_{I}, \sigma_{0, II}} [\tau] & =
\sum_{n=1}^\infty n \mathbb{P}^{n, x_0}_{\tau, \sigma_I, \sigma_{0,
    II}} \big(\omega\in X^{x_0,\infty}; ~ \tau(\omega)=n\big) \\ & = 
\sum_{n=1}^\infty n \bar{\mathbb{P}}^{n, x_0}_{\bar{\sigma}_I, \bar{\sigma}_{0,
    II}} \big(\omega\in X^{x_0,\infty}; ~ \tau(\omega)=n\big) \\ & \leq 
\sum_{n=1}^\infty n \bar{\mathbb{P}}^{n, x_0}_{\bar{\sigma}_I, \bar{\sigma}_{0,
    II}} \big(\omega\in Y^{x_0,\infty}; ~ \bar\tau(\omega)=n\big) \\ &
= \bar{\mathbb{E}}^{x_0}_{\bar{\sigma}_{I}, \bar{\sigma}_{0, II}}
[\bar\tau] \leq \bar{\mathbb{E}}^{x_0}_{\bar{\sigma}_{I},
  \bar{\sigma}_{0, II}} [\bar\tau_0].
\end{split}
\end{equation}
Indeed, if $\omega = (x_0, x_1, \ldots)\in X^{\infty, x_0}$ satisfies $\tau(\omega) = n$, then
for all $k<n$ we have: $x_k\in\Omega$ and $(x_0, x_1,\ldots
x_k)\not\in A^\tau_k$, and hence there must be: $\gamma_k[x_0,\ldots x_k]
= \bar \gamma_k[x_0,\ldots x_k]$. Consequently:
$$\mathbb{P}^{n, x_0}_{\tau, \sigma_I, \sigma_{0,
    II}} \big(\omega\in X^{x_0,\infty}; ~ \tau(\omega)=n\big) =
\bar{\mathbb{P}}^{n, x_0}_{\bar{\sigma}_I, \bar{\sigma}_{0,  II}}
\big(\omega\in X^{x_0,\infty}; ~ \tau(\omega)=n\big).$$ 

\medskip

{\bf 3.} We now estimate the
expectation $\bar{\mathbb{E}}^{x_0}_{\bar\sigma_I, \bar\sigma_{0,
      II}}[\bar\tau_0]$. Let $v_0:(0,+\infty)\to\mathbb{R}$ be a
  smooth, increasing and concave function of the form:
$$v_0(s) = \left\{\begin{array}{ll} -as^2 - bs^{2-N} + c & \mbox{ for
    } N>2\\ -as^2 -b\log s + c & \mbox{ for } N=2,\end{array}\right.$$
where the positive constants $a, b, c$ are such that the function
$v(x) = v_0(|x-z_0|)$ solves the following problem:
$$\left\{\begin{array}{ll} \Delta v = -2(N+2) & \mbox{ in
    } B_R(z_0)\setminus \bar B_\delta(z_0)\\ v=0 & \mbox{ on
    } \partial B_\delta(z_0) \\ \frac{\partial v}{\partial\vec n} = 0
    & \mbox{ on } \partial B_R(z_0).\end{array}\right.$$
As in (\ref{aiuto}), we obtain:
\begin{equation}\label{aiuto3}
\forall x\in \mathbb{R}^N\setminus \bar B_{\delta-\epsilon}(z_0) \qquad
\fint_{B_\epsilon(x)} v(w)~\mbox{d}w = v(x) - \epsilon^2 .
\end{equation}
On the other hand, for every $x\in B_R(z_0)\setminus \bar
B_{\delta-\epsilon}(z_0)$, we have:
\begin{equation*}
\begin{split}
& \fint_{B_\epsilon(x) \cap B_R(z_0)} v - \fint_{B_\epsilon(x)} v \\ & =
\left(\frac{1}{|B_\epsilon(x) \cap B_R(z_0)|} -  \frac{1}{|B_\epsilon(x) |} \right) \int_{B_\epsilon(x) \cap
  B_R(z_0)} v - \frac{1}{|B_\epsilon(x) |}
\int_{B_\epsilon(x)\setminus B_R(z_0)} v \\ & \leq \left(\frac{1}{|B_\epsilon(x) \cap B_R(z_0)|} -
  \frac{1}{|B_\epsilon(x) |} \right) v_0(R) |B_\epsilon(x) \cap
  B_R(z_0)|- \frac{1}{|B_\epsilon(x) |} v_0(R) |B_\epsilon(x)\setminus
  B_R(z_0)| = 0.
\end{split}
\end{equation*}
Consequently, recalling (\ref{em}) and in view of (\ref{aiuto3}):
\begin{equation}\label{aiuto2}
\forall x\in Y\setminus \bar B_{\delta-\epsilon}(z_0) \qquad \int
v~\mbox{d}m(x) \leq v(x) -\epsilon^2.
\end{equation}

Consider the following bounded Borel functions on $Y$:
\begin{equation*}
Q_n(x) =\left\{\begin{array}{ll} v(x) + \frac{\beta}{2}n\epsilon^2 &
    \mbox{ if } |x-z_0|>\delta-\epsilon\\
v(x) & \mbox{ if } \delta-2\epsilon < |x-z_0|\leq \delta-\epsilon\\
v_0(\delta-2\epsilon) & \mbox{ if } |x-z_0|\leq \delta-2\epsilon,
\end{array}\right.
\end{equation*}
and compute the conditional expectation of the random variables
$Q_n\circ x_n$, which by Lemma \ref{condi} equals: 
$$\bar{\mathbb{E}}^{y_0}_{\bar\sigma_I, \bar\sigma_{0,II}} \big\{
Q_n\circ x_n| \mathcal{F}_{n-1}^{x_0}\big\} (x_0,\ldots x_{n-1}) =
\int_Y Q_n ~\mbox{d}\bar\gamma_{n-1}[x_0,\ldots x_{n-1}].$$
We distinguish three cases.

\medskip

\noindent {\underline{{\em Case 1:} $x_{n-1}\in Y\setminus \bar B_\delta(z_0)$.}} Using the
fact that $v_0$ is increasing, denoting its Lipschitz constant on
$[\delta/3, R+\delta]$ by $C_\delta$, recalling (\ref{aiuto2}) and
observing that $|\bar\sigma_{II}(x_{n-1}) -
z_0|>\delta-\epsilon$, we get:
\begin{equation*}
\begin{split}
\int_Y &Q_n  ~\mbox{d}\bar\gamma_{n-1}[x_0,\ldots x_{n-1}] =
\frac{\alpha}{2} Q_n\big(\bar\sigma_I^{n-1}(x_0,\ldots x_{n-1})\big) +
\frac{\alpha}{2} Q_n\big(\bar\sigma_{0, II}^{n-1}(x_{n-1})\big) +
\beta\int Q_n~\mbox{d}m(x_{n-1}) \\ & \leq \frac{\alpha}{2}
v_0\big(|x_{n-1} - z_0| + \epsilon\big) +  \frac{\alpha}{2}
v_0\big(|x_{n-1} - z_0| - \epsilon + \epsilon^3\big) + \beta
\big(v_0(|x_{n-1} - z_0|)-\epsilon^2\big) +
\frac{\delta}{2}n\epsilon^2 \\ & \leq \alpha v_0( |x_{n-1} - z_0|) + \beta
v_0 (|x_{n-1} - z_0|) + C_\delta\epsilon^3 -\beta\epsilon^2
+\frac{\beta}{2}n\epsilon^2 \\ & \leq v_0(|x_{n-1} - z_0|) +
\frac{\beta}{2}(n-1)\epsilon^2 = Q_{n-1}(x_{n-1}),
\end{split}
\end{equation*}
where the last two inequalities follow from concavity of $v_0$, and the
fact that if only:
\begin{equation}\label{restri}
\epsilon < \min\big(\frac{\beta}{2C_\delta}, \frac{\delta}{3}\big),
\end{equation}
then: $ C_\delta\epsilon^3 -\beta\epsilon^2 + \frac{\beta}{2}n\epsilon^2 \leq
\frac{\beta}{2}\epsilon^2(n-1).$

\medskip

\noindent {\underline{{\em Case 2:} $x_{n-1}\in B_\delta(z_0)\setminus \bar B_{\delta-\epsilon}(z_0)$.}} Then:
\begin{equation*}
\begin{split}
\int_Y Q_n  ~\mbox{d}\bar\gamma_{n-1}[x_0,\ldots x_{n-1}] & =
{\alpha}Q_n( x_{n-1}) + \beta\int Q_n~\mbox{d}m(x_{n-1}) \\ & \leq \alpha
v_0(|x_{n-1} - z_0|) + \beta
\big(v_0(|x_{n-1} - z_0|)-\epsilon^2\big) +
\frac{\delta}{2}n\epsilon^2 \\ & = v_0( |x_{n-1} - z_0|) + 
\frac{\beta}{2}(n-1)\epsilon^2 = Q_{n-1}(x_{n-1}).
\end{split}
\end{equation*}

\medskip

\noindent {\underline{{\em Case 3:} $x_{n-1}\in \bar
    B_{\delta-\epsilon}(z_0)$.}} In this case,  we directly have:
\begin{equation*}
\int_Y Q_n  ~\mbox{d}\bar\gamma_{n-1}[x_0,\ldots x_{n-1}]  = Q_{n-1}(x_{n-1}).
\end{equation*}

\medskip

Consequently, it follows that $\{Q_n\circ x_n\}_{n\geq 0}$ is a
supermartingale with respect to the filtration
$\{\mathcal{F}^{x_0}_n\}$, under the assumption (\ref{restri}).
Applying Doob's theorem to pairs of bounded stopping times:
$\bar\tau_0\wedge n$ and $ 0$, we obtain:
$$v_0(|x_0 - z_0|) \geq \bar{\mathbb{E}}^{x_0}_{\bar\sigma_I,
  \bar\sigma_{0, II}} [Q_{\bar\tau_0\wedge n}] = \bar{\mathbb{E}}^{x_0}_{\bar\sigma_I,
  \bar\sigma_{0, II}} [v_0(|x_{\bar\tau_0\wedge n} - z_0|)] +
\frac{\beta}{2}\epsilon^2 \bar{\mathbb{E}}^{x_0}_{\bar\sigma_I,
  \bar\sigma_{0, II}} [\bar\tau_0\wedge n].  $$
After passing with $n\to\infty$, in view of the monotone convergence
and dominated convergence theorems, and applying \ref{koniec2} we get:
\begin{equation}\label{ges}
\frac{\beta}{2}\epsilon^2 \bar{\mathbb{E}}^{x_0}_{\bar\sigma_I,
  \bar\sigma_{0, II}} [\bar\tau_0] \leq v_0(|x_0 - z_0|)  + \bar{\mathbb{E}}^{x_0}_{\bar\sigma_I,
  \bar\sigma_{0, II}} [|v_0(|x_{\bar\tau_0} - z_0|)|].
\end{equation}
Since $v_0(\delta)=0$, it follows that $v_0(|x_0-z_0|) \leq C_\delta
\big(|x_0 - z_0|-\delta\big) = C_\delta |x_0-y_0|$. Further, whenever
$\bar\tau_0(\omega)<+\infty$, we have: $|v_0(|x_{\bar\tau_0} -
z_0|)|\leq C_\delta\epsilon$. Now, (\ref{ges}) together with
(\ref{kot}) and (\ref{pies}) imply:
$$\mathbb{E}^{x_0}_{\tau, \sigma_I, \sigma_{0, II}} [|x_\tau - y_0|]
\leq C\delta + C_\delta (|x_0 - x_0|+\epsilon)$$
for all $\epsilon$ sufficiently small  (as in (\ref{restri})).

\medskip

{\bf 4.} Clearly, exchanging the roles of $\sigma_I$ and
$\sigma_{II}$, and defining $\sigma_{0, I}$ by means of (\ref{sn})
while setting $\sigma_{II}$ to be fixed, the same proof as above yields:
$$\mathbb{E}^{x_0}_{\tau, \sigma_{I, 0}, \sigma_{II}} [|x_\tau - x_0|]
\leq C\delta + C_\delta (|x_0 - y_0|+\epsilon)$$
for all $\epsilon$ sufficiently small. 
This gives (\ref{three}) and ends the proof of Lemma \ref{closetoboundary}.
\endproof

\section{Appendix: a proof of Lemma \ref{lemkoniec}: games end almost-surely}

{\bf 1.} Consider a new ``game-board'' $Y=\mathbb{R}^N$ with the same
initial token position $x_0\in \Omega$. By the same symbols $\sigma_I$
and $\sigma_{II}$ we denote
the extensions on $\{Y^{n}\}_{n=0}^\infty$ of the given strategies $\sigma_I$
and $\sigma_{II}$, defined as in the formula (\ref{easy_ext}), where in order to simplify
notation we suppress the overline in $\bar\sigma$. Define also the new
transition probabilities:
$$\gamma_n[x_0,\ldots, x_n] =
\frac{\alpha}{2}\delta_{\sigma_I^n(x_0,\ldots, x_n)} +
\frac{\alpha}{2}\delta_{\sigma_{II}^n(x_0,\ldots, x_n)} +
\frac{\beta}{|B_\epsilon|} \mathcal{L}_N\lfloor B_\epsilon(x_n), $$ 
and let $\mathbb{P}^{n, x_0}_{\sigma_I, \sigma_{II}}$ and
$\mathbb{P}^{x_0}_{\sigma_I, \sigma_{II}}$ be the resulting
probability measures on $Y^{\infty, x_0}$ as in subsection \S \ref{probab}.
By Lemma \ref{transitionlemma} and since $\tau\leq \tau_0$, it follows that:
\begin{equation}\label{app1}
\begin{split}
\mathbb{P}^{x_0}_{\tau, \sigma_{I}, \sigma_{II}}& (\{\tau<\infty\})  =
\sum_{n=0}^\infty \mathbb{P}^{n, x_0}_{\tau, \sigma_I, \sigma_{II}}
\big(\{\omega\in X^{x_0,\infty}; ~ \tau(\omega)=n\big\}) \\ & =  
\sum_{n=0}^\infty \bar{\mathbb{P}}^{n, x_0}_{\bar{\sigma}_I,
  \bar{\sigma}_{II}} \big(\{\omega\in X^{x_0,\infty}; ~ \tau(\omega)=n\}\big) =
\sum_{n=0}^\infty \bar{\mathbb{P}}^{n, x_0}_{\bar{\sigma}_I,
  \bar{\sigma}_{II}} \big(\{\omega\in Y^{x_0,\infty}; ~
\tau(\omega)=n\}\big) \\ & 
= {\mathbb{P}}^{x_0}_{\bar{\sigma}_{I}, \bar{\sigma}_{II}}
(\{\tau<\infty\}) \geq {\mathbb{P}}^{x_0}_{\bar{\sigma}_{I},
  \bar{\sigma}_{II}} (\{\tau_0<\infty\}).
\end{split}
\end{equation}
Let now $A_0$ be the sector in $B_\epsilon=B_\epsilon (0)$:
$$A_0=\Big\{x\in\mathbb{R}^N; ~ |x|\in(\epsilon/2, \epsilon) ~~\mbox{ and }~~
\angle (x, e_1) \in (-\pi/8, \pi/8)\Big\}. $$
For $M\in \mathbb{N}$ sufficiently large to ensure that $M$
consecutive shifts of the token by vectors chosen from $A_0$ will get
the token, originally located at any point in $\Omega$, out of
$\Omega$, define:
$$S_{x_0} = \Big\{\omega=(x_0, x_1,\ldots)\in Y^{x_0, \infty}; ~
\exists i_0\quad\forall i=i_0, i_0+1,\ldots, i_0+M \quad x_{i+1} -
x_i\in A_0\Big\}.$$
t is clear that:
\begin{equation}\label{app2}
\mathbb{P}^{x_0}_{\sigma_I, \sigma_{II}} (\{\tau_0<\infty\}) \geq \mathbb{P}^{x_0}_{\sigma_I, \sigma_{II}} (S_{x_0}). 
\end{equation}

\medskip

{\bf 2.} We now show that the probability in the right hand side of
(\ref{app2}) equals $1$. Recall that for a bounded
$\mathcal{F}^{x_0}$-measurable function
$f:Y^{x_0,\infty}\to\mathbb{R}$, its conditional expectation
$\mathbb{E}^{x_0}_{\sigma_I, \sigma_{II}}\{f|\mathcal{F}_1^{x_0}\}$ is
the function: $(x_0, x_1)\mapsto \mathbb{E}^{x_1}_{\sigma_I',
  \sigma_{II}'}[f']$, where $\sigma_I'$, $\sigma_{II}'$ are strategies
on $Y^{x_0, \infty}$ given by: 
$$(\sigma_I')^n(x_1,\ldots, x_{n+1}) =
\sigma_I^{n+1}(x_0, x_1,\ldots, x_{n+1}), \qquad
(\sigma_{II}')^n(x_1,\ldots, x_{n+1}) = \sigma_{II}^{n+1}(x_0, x_1,\ldots, x_{n+1}),$$ 
while the Borel random
variable $f':Y^{x_1, \infty}\to\mathbb{R}$ is similarly set to be:
$f'(x_1,x_2,\ldots ) = f( x_0, x_1,x_2,\ldots )$.
Consequently:
\begin{equation}\label{app3}
\begin{split}
\mathbb{P}^{x_0}_{\sigma_{I}, \sigma_{II}}& (S_{x_0})  =
\int_{\mathbb{R}^N} \mathbb{E}^{x_1}_{\sigma_I',
  \sigma_{II}'}\big[(\chi_{S_{x_0}})'\big]~\mbox{d}\gamma_0[x_0] \\ & = 
\frac{\alpha}{2}\mathbb{E}^{\sigma_I^0(x_0)}_{\sigma_I',
  \sigma_{II}'}\big[(\chi_{S_{x_0}})'\big] + \frac{\alpha}{2}\mathbb{E}^{\sigma_{II}^0(x_0)}_{\sigma_I',
  \sigma_{II}'}\big[(\chi_{S_{x_0}})'\big] +
\beta\fint_{B_\epsilon(x_0)} \mathbb{E}^{x_1}_{\sigma_I',
  \sigma_{II}'}\big[(\chi_{S_{x_0}})'\big] ~\mbox{d}x_1 \\ &
=  \frac{\alpha}{2}\mathbb{P}^{\sigma_I^0(x_0)}_{\sigma_I',
  \sigma_{II}'}(S^{x_0}_{\sigma_I^0(x_0)}) + \frac{\alpha}{2}\mathbb{P}^{\sigma_{II}^0(x_0)}_{\sigma_I',
  \sigma_{II}'}(S^{x_0}_{\sigma_{II}^0(x_0)}) +
\frac{\beta}{|B_\epsilon|} \int_{B_\epsilon(x_0)} \mathbb{P}^{x_1}_{\sigma_I', \sigma_{II}'}(S^{x_0}_{x_1})~\mbox{d}x_1,
\end{split}
\end{equation}
where each set $S_{x_1}^{x_0} = \{(x_1,x_2,\ldots)\in Y^{x_1,\infty};~ (x_0,
x_1,x_2,\ldots)\in S_{x_0}\}$ clearly contains $S_{x_1}$. Let now:
\begin{equation}\label{defq}
q(x) = \inf_{\tilde\sigma_I,\tilde\sigma_{II}}
\mathbb{P}^x_{\tilde\sigma_I,\tilde\sigma_{II}} (S_x).
\end{equation}
By an easy translation invariance argument, $q(x) = q$ is actually
independent of $x\in\mathbb{R}^N$. Hence, in view of (\ref{app3}) we
obtain:
\begin{equation}\label{app4}
\begin{split}
\mathbb{P}^{x_0}_{\sigma_{I}, \sigma_{II}}  (S_{x_0})  & \geq 
\frac{\alpha}{2}\mathbb{P}^{\sigma_I^0(x_0)}_{\sigma_I',
  \sigma_{II}'}(S_{\sigma_I^0(x_0)}) + \frac{\alpha}{2}\mathbb{P}^{\sigma_{II}^0(x_0)}_{\sigma_I',
  \sigma_{II}'}(S_{\sigma_{II}^0(x_0)}) \\ & \qquad\quad +
\frac{\beta}{|B_\epsilon|} \int_{B_\epsilon(x_0)\setminus (x_0+A_0)}
\mathbb{P}^{x_1}_{\sigma_I', \sigma_{II}'}(S_{x_1})~\mbox{d}x_1 +
\frac{\beta}{|B_\epsilon|} \int_{x_0+A_0} 
\mathbb{P}^{x_1}_{\sigma_I', \sigma_{II}'}(S^{x_0}_{x_1})~\mbox{d}x_1
\\ & \geq \Big(\frac{\alpha}{2} + \frac{\alpha}{2} +
\frac{\beta}{|B_\epsilon|} |B_\epsilon\setminus A_0|\Big)q + 
\frac{\beta}{|B_\epsilon|} \int_{x_0+A_0} 
\mathbb{P}^{x_1}_{\sigma_I', \sigma_{II}'}(S^{x_0}_{x_1})~\mbox{d}x_1
\\ & = \theta q + \frac{\beta}{|B_\epsilon|} \int_{x_0+A_0} 
\mathbb{P}^{x_1}_{\sigma_I', \sigma_{II}'}(S^{x_0}_{x_1})~\mbox{d}x_1,
\end{split}
\end{equation}
where we defined:
$$\theta = \alpha + \beta\Big(1-\frac{|A_0|}{|B_\epsilon|}\Big).$$

\medskip

{\bf 3.} Similarly as in the previous step, for every $x_1\in x_0+A_0$ there holds:
\begin{equation*}
\begin{split}
\mathbb{P}^{x_1}_{\sigma_{I}', \sigma_{II}'}  (S^{x_0}_{x_1})  =
\frac{\alpha}{2}\mathbb{P}^{\sigma_I^1(x_0, x_1)}_{\sigma_I'',
  \sigma_{II}''}(S^{x_0, x_1}_{\sigma_I^1(x_0, x_1)}) & + \frac{\alpha}{2}\mathbb{P}^{\sigma_{II}^1(x_0, x_1)}_{\sigma_I'',
  \sigma_{II}''}(S^{x_0, x_1}_{\sigma_{II}^1(x_0, x_1)}) \\ & +
\frac{\beta}{|B_\epsilon|} \int_{B_\epsilon(x_0)}
\mathbb{P}^{x_2}_{\sigma_I'', \sigma_{II}''}(S^{x_0,x_1}_{x_2})~\mbox{d}x_2,
\end{split}
\end{equation*}
where the set $S^{x_0,x_1}_{x_2}=\{(x_2, x_3,\ldots)\in Y^{x_2,\infty}; ~
(x_2, x_2, x_2, x_3\ldots)\in S_{x_0}\}$ contains the set
$S_{x_2}$. By (\ref{defq}) we see that:
$$\inf_{\tilde\sigma_I,\tilde\sigma_{II}}
\mathbb{P}^{x_2}_{\tilde\sigma_I,\tilde\sigma_{II}} (S^{x_0,  x_1}_{x_2})\geq q,$$
and hence:
\begin{equation*}
\mathbb{P}^{x_1}_{\sigma_{I}', \sigma_{II}'}  (S^{x_0}_{x_1})  \geq
\theta q + \frac{\beta}{|B_\epsilon|} \int_{x_1+ A_0}
\mathbb{P}^{x_2}_{\sigma_I'', \sigma_{II}''}(S^{x_0,x_1}_{x_2})~\mbox{d}x_2.
\end{equation*}
Since $1-\theta = \beta |A_0|/|B_\epsilon|$, the estimate in
(\ref{app4}) becomes:
\begin{equation*}
\mathbb{P}^{x_0}_{\sigma_{I}, \sigma_{II}}  (S_{x_0})   = \theta q +
(1-\theta)\theta q +
\Big(\frac{\beta}{|B_\epsilon|} \Big)^2\int_{x_0+A_0} \int_{x_1+A_0} 
\mathbb{P}^{x_2}_{\sigma_I'', \sigma_{II}''}(S^{x_0,x_1}_{x_2})~\mbox{d}x_2 ~\mbox{d}x_1.
\end{equation*}

Iterating the same argument as above $M$ times, we arrive at:
\begin{equation}\label{app5}
\begin{split}
\mathbb{P}^{x_0}_{\sigma_{I}, \sigma_{II}}  (S_{x_0})   \geq & ~\theta q +
(1-\theta)\theta q + (1-\theta)^2\theta q + \ldots +
(1-\theta)^{M-1}\theta q \\ & + 
\Big(\frac{\beta}{|B_\epsilon|} \Big)^M\int_{x_0+A_0} \int_{x_1+A_0}
\ldots \int_{x_{M-1}+A_0} 
\mathbb{P}^{x_M}_{\sigma_I^{'M}, \sigma_{II}^{'M}}(S^{x_0,\ldots,x_{M-1}}_{x_M})~\mbox{d}x_M\ldots ~\mbox{d}x_1.
\end{split}
\end{equation}
But each probability under the iterated integrals equals to $1$,
because: $S^{x_0,\ldots,x_{M-1}}_{x_M} = Y^{x_M,\infty}$ for $x_1\in
x_0+A_0$, $x_2\in x_1+A_0$, $\ldots, x_M\in
x_{M-1}+A_0$. Consequently, by (\ref{app5}) we get:
$$\mathbb{P}^{x_0}_{\sigma_{I}, \sigma_{II}}  (S_{x_0}) \geq
\sum_{n=0}^{M-1}(1-\theta)^n\theta q + (1-\theta)^M =
\big(1-(1-\theta)^M\big) q + (1-\theta)^M.$$
Infimizing over all strategies $\sigma_I, \sigma_{II}$, it follows
that $q\geq 1$, since $\theta<1$ because of $\beta>0$. Further: 
$$\mathbb{P}^{x_0}_{\sigma_{I}, \sigma_{II}}  (S_{x_0}) \geq q=1.$$
This achieves (\ref{koniec}) in view of (\ref{app1}) and (\ref{app2}).\endproof

\section{Appendix: a proof of Lemma \ref{uni}: uniqueness of viscosity
solutions}\label{sceuni}

{\bf 1.}  Firstly, note that the continuous function $u$ is a
viscosity $p$-supersolution to (\ref{dirichlet}). Thus, by the
classical result in \cite{JLM}, $u$ is $p$-superharmonic in $\Omega$,
and consequently (see \cite{L}) we have $u\in W^{1,p}_{loc}(\Omega)$.
In the same manner, it follows from Definition \ref{viscosity} that 
$u$ is a viscosity $p$-subsolution on the open set
$\mathcal{V}=\{x\in\Omega; ~ u(x)>\Psi(x)\}$, 
hence $u$ is $p$-subharmonic in $\mathcal{V}$.


Therefore,  using the variational definitions of $p$-super- and $p$-subharmonic functions, we have that  for any
open, Lipschitz domain $\mathcal{U}\subset\subset \Omega$ there holds:
\begin{equation}\label{lu1}
\int_{\mathcal{U}}|\nabla u|^p\leq \int_{\mathcal{U}}|\nabla
(u+\phi)|^p \qquad \forall \phi\in\mathcal{C}_0^\infty(\mathcal{U},\mathbb{R}_+),
\end{equation}
\begin{equation}\label{lu2}
\int_{\mathcal{U}\cap\mathcal{V}}|\nabla u|^p\leq \int_{\mathcal{U}\cap\mathcal{V}}|\nabla
(u+\phi)|^p \qquad \forall \phi\in\mathcal{C}_0^\infty(\mathcal{U}\cap\mathcal{V},\mathbb{R}_-).
\end{equation}

Let now $\phi\in\mathcal{C}_0^\infty(\mathcal{U},\mathbb{R})$ be such
that $\Psi\leq u+\phi$. We write: $\phi = \phi^+ +\phi^-$
as the sum of the positive and negative parts of $\phi$. Denote:
\begin{equation*}
D^+ = \{x\in\mathcal{U}; ~ \phi(x)>0\}\quad\mbox{
  and } \quad D^- = \{x\in\mathcal{U}; ~ \phi(x)<0\}\subset\mathcal{V}.
\end{equation*}
Then we have, in view of (\ref{lu1}) and (\ref{lu2}):
\begin{equation}\label{lu3}
\begin{split}
 \int_{\mathcal{U}}|\nabla u +\nabla \phi|^p  & = \int_{D^+}|\nabla u
+\nabla \phi|^p + \int_{D^-}|\nabla u +\nabla \phi|^p +
\int_{\{\phi=0\}}|\nabla u|^p \\ & =
\int_{D^+}|\nabla u +\nabla (\phi^+)|^p + \int_{\mathcal{U}\cap\mathcal{V}}|\nabla
u +\nabla (\phi^-)|^p - \int_{(\mathcal{U}\cap\mathcal{V})\setminus D^-}|\nabla u|^p  
\\ & \geq \int_{D^+}|\nabla u|^p 
+ \int_{\mathcal{U}\cap\mathcal{V}}|\nabla u|^p - \int_{(\mathcal{U}\cap\mathcal{V})\setminus D^-}|\nabla u|^p 
 = \int_{\mathcal{U}}|\nabla u|^p.
\end{split}
\end{equation}
The above means precisely that $u$ is a variational solution of the
obstacle problem on $\mathcal{U}$, with the lower obstacle $\Psi$ and
boundary data $f=u_{|\partial\mathcal{U}}$; we denote this problem by
$\mathcal{K}_{\Psi, f}(\mathcal{U})$. 
Existence and uniqueness of such variational solution is an easy direct consequence of the strict convexity of
the functional $\int_{\mathcal{U}} |\nabla u|^p$. It is also quite
classical that such solutions obey a comparison principle \cite{L}.

\medskip

{\bf 2.} Let now $u$ and $\bar u$ be as in the statement of the Lemma.
Fix $\epsilon>0$. By the uniform continuity of
$u$, $\bar u$ on $\Omega$ and the fact that they coincide on
$\partial\Omega$, there exists $\delta>0$ such that: 
\begin{equation}\label{lu4}
|u(x)-\bar u(x)|\leq \epsilon \qquad\forall x\in\mathcal{O}_\delta(\partial\Omega)
:= \big(\partial\Omega+B(0,\delta)\big)\cap\bar\Omega.
\end{equation}
Consider an open, Lipschitz set  $\mathcal{U}$  satisfying:
$\Omega\setminus \mathcal{O}_\delta(\partial\Omega)\subset\subset \mathcal{U}
\subset\subset \Omega.$
By the argument in Step 1, $u$ is the variational solution of the
problem set $\mathcal{K}_{\Psi,  u_{\mid \partial\mathcal{U}}}(\mathcal{U})$, and it is also easy to
observe that $\bar u+\epsilon$
is the variational solution of the problem $\mathcal{K}_{\Psi,
 \bar{u}_{\mid\partial\mathcal{U}}+\epsilon}(\mathcal{U})$.
Since $u<\bar u+\epsilon$ on $\partial\mathcal{U}$ in view of
(\ref{lu4}), the comparison principle implies that
$u\leq\bar u+\epsilon $ in $\bar{\mathcal{U}}$. 

Reversing the same  argument and taking into account (\ref{lu4}), we arrive at:
$$|u(x) - \bar u(x)|\leq \epsilon \qquad \forall x\in\bar\Omega.$$
We conclude that $u=\bar u$ in $\bar\Omega$ by passing to the limit
$\epsilon\to 0$ in the above bound.
\endproof

\end{document}